\documentclass[11pt]{article}

\usepackage[T1]{fontenc}
\usepackage[utf8]{inputenc}
\usepackage{lmodern}
\usepackage[a4paper,margin=2.6cm]{geometry}
\usepackage{amsmath,amssymb,amsthm,mathtools}
\usepackage{enumitem}
\usepackage{hyperref}
\usepackage{microtype}

\newtheorem{thm}{Theorem}[section]
\newtheorem{lem}[thm]{Lemma}
\newtheorem{prop}[thm]{Proposition}
\newtheorem{cor}[thm]{Corollary}
\newtheorem{open}[thm]{Open problem}
\theoremstyle{definition}
\newtheorem{defn}[thm]{Definition}
\theoremstyle{remark}
\newtheorem{rem}[thm]{Remark}

\newcommand{\R}{\mathbb{R}}

\newcommand{\Q}{\mathbb{Q}}

\newcommand{\id}{\operatorname{id}}

\title{N-ary quasi-arithmetic means and families without regularity}
\author{Gergely Kiss and Ekaterina Shulman}
\date{}

\begin{document}
\maketitle

\begin{abstract}
The classical theorems of Kolmogorov--Nagumo--de Finetti and of Aczel--Maksa characterize quasi-arithmetic means from two complementary directions: the former for compatible families of means satisfying the replacement axiom, and the latter for bisymmetric means of fixed arity. We refine both representation results by showing that the required continuity follows automatically. Our main result states that every reflexive, symmetric, bisymmetric and partially strictly increasing $n$-variable operation on a real interval is continuous and hence quasi-arithmetic. The proof is based on a recursive construction on $n$-adic rationals given by bisymmetry, and a dense-domain continuity argument. The same method also yields the regularity-free Kolmogorov--Nagumo--de Finetti theorem for compatible families of strictly increasing symmetric means.
\end{abstract}

\noindent\textbf{Keywords.} Bisymmetry; quasi-arithmetic mean; Kolmogorov--Nagumo--de Finetti mean; strict monotonicity; reflexivity; automatic continuity; $n$-ary operation.

\smallskip
\noindent\textbf{2020 Mathematics Subject Classification.} Primary 39B22; Secondary 26A15, 39B12.

\section{Introduction}
The theory of quasi-arithmetic means goes back to the classical works of Kolmogoroff,
Nagumo and de Finetti, who characterized families of means satisfying natural compatibility
and regularity assumptions (see \cite{deFinetti1931,Kolmogoroff1930,Nagumo1930}). In one
of its standard forms, a quasi-arithmetic mean generated by a continuous strictly monotone
bijection $\varphi:I\to J$, where $I$ and $J$ are real intervals, is given by
\[
M_n(x_1,\dots,x_n)=
\varphi^{-1}\!\left(\frac{\varphi(x_1)+\cdots+\varphi(x_n)}{n}\right),
\qquad n\ge2.
\]

Aczel's binary characterization gave a different and very influential approach. Instead of
starting from a whole compatible family $(M_n)_{n\ge1}$, one studies a single binary operation
$F:I^2\to I$ satisfying reflexivity, symmetry, strict monotonicity and the bisymmetry equation
\[
F(F(x,y),F(u,v))=F(F(x,u),F(y,v)).
\]
Under continuity, Aczel proved that such an operation is necessarily a quasi-arithmetic mean
\cite{Aczel1948} (see also \cite{AczelDhombres1989}). In the $n$-variable setting, the corresponding
continuous representation was obtained in the works of Maksa and of Maksa--Mokken--Munnich
\cite{Maksa1999,MaksaMokkenMunnich2000}.

From a different, more algebraic point of view, the same row--column interchange
law is usually called the medial or entropic law. In the quasigroup setting,
classical results of Toyoda, Murdoch and Bruck show that medial quasigroups admit
an affine representation over abelian groups; see
\cite{Bruck1944,Murdoch1941,Toyoda1941}. A related algebraic theory of abstract mean values was developed by Evans
\cite{Evans}. The resulting representation is not the quasi-arithmetic affine
representation studied here, but an algebraic one involving endomorphisms of
the underlying structure. Thus Evans' work provides a useful conceptual
predecessor: it shows, in an algebraic setting, how mean-value type identities
and suitable non-degeneracy assumptions can lead to a rigid structural
description. They do not, however, apply directly to our situation, since we do not
assume a quasigroup structure, cancellativity or solvability of equations. The
present paper treats the real-interval ordered setting, where reflexivity and
partial strict monotonicity replace these algebraic invertibility assumptions
and force the missing regularity.

Quasi-arithmetic means and bisymmetry have since appeared in several broader forms. Weighted
and generalized quasi-arithmetic means, Bajraktarevi\'c-type means, deviation means and
mean-type mappings have led to a rich functional-equation theory of means. In many of these
settings, bisymmetry or generalized bisymmetry expresses a compatibility of averaging
procedures, often in connection with Gauss composition, invariance equations or aggregation
theory (see, for instance, \cite{GrabischMarichalMesiarPap2009,MatkowskiPales2015}). Recent
work on generalized quasi-arithmetic means and related classes further illustrates the continuing
role of these questions in the theory of means \cite{PalesPasteczka2026,KissNagy2024}. Bisymmetry
has also been studied in more algebraic or order-theoretic settings, for example for quasitrivial
and order-preserving operations on chains \cite{DevilletKissMarichal2019}.

The continuity assumption is the central point of the present paper. In the binary symmetric
reflexive case, Burai, Kiss and Szokol proved that continuity is not needed: every reflexive,
symmetric, bisymmetric and partially strictly increasing binary operation on a non-degenerate real interval
is automatically continuous and hence quasi-arithmetic \cite{BuraiKissSzokol2021}. This raised
the natural $n$-variable question whether the same regularity improvement holds for symmetric
$n$-ary bisymmetric operations.

We answer this question affirmatively. The result is not a formal consequence
of the binary automatic-continuity theorem of Burai--Kiss--Szokol. Although
product arities admit reductions to lower arities (see Section \ref{sec:product}), the general $n$-variable
case requires a new structural description given in Section \ref{sec:Dn-construction}. In particular, one has to recover,
from symmetry and bisymmetry alone, a dense-domain averaging identity adapted to
the $n$-ary operation. This is analogous in spirit to the structural mechanism
behind Aczel's binary theorem, but it is not available from the classical
continuous representation theory, where continuity is assumed from the outset.

Thus the novelty is not the continuous representation itself, but the fact that
the required regularity and the corresponding quasi-arithmetic structure are
forced by symmetry, bisymmetry, reflexivity and order alone. In this sense, the
paper gives an automatic-continuity version of the
Maksa--Maksa--Mokken--Munnich representation theorem in the symmetric
$n$-variable case.

Our main theorem is the following.

\begin{thm}[Main theorem]\label{thm:main}
Let $I$ be a non-degenerate real interval, let $n\ge2$, and let
$F:I^n\to I$ be reflexive, symmetric, bisymmetric and partially strictly increasing.
Then there exist a non-degenerate real interval $J$ and a continuous strictly
monotone bijection $\varphi:I\to J$ such that
\[
F(x_1,\dots,x_n)=
\varphi^{-1}\!\left(\frac{\varphi(x_1)+\cdots+\varphi(x_n)}{n}\right)
\qquad (x_1,\dots,x_n\in I).
\]
In particular, $F$ is continuous.
\end{thm}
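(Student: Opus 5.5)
We follow the route the abstract indicates: build the candidate generator on $n$-adic rationals using bisymmetry, then prove continuity through a dense-domain argument. Since $I$ is non-degenerate, we may fix $a<b$ in $I$.

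\textbf{Step 1: a dyadic-type skeleton.} Define a map $\psi$ on the set $D_n$ of $n$-adic rationals in $[0,1]$. Set $\psi(0)=a$ and $\psi(1)=b$. For level $k+1$, write
\[
t=\frac{t_1+\cdots+t_n}{n}
\]
with each $t_i$ of level $k$, and put $\psi(t)=F(\psi(t_1),\dots,\psi(t_n))$.

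The key lemma is that $\psi$ is well defined, i.e. the value does not depend on the chosen decomposition. We prove it by induction on the level. Symmetry lets us permute the arguments. Bisymmetry, applied to an $n\times n$ array of level-$(k-1)$ points, shows that any two decompositions with the same sum give the same value: expand each $t_i$ one level further and rearrange rows and columns. Reflexivity handles repeated entries.

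Partial strict monotonicity, again by induction, makes $\psi$ strictly increasing on $D_n$. The same bisymmetry computation gives the averaging identity
\[
F(\psi(t_1),\dots,\psi(t_n))=\psi\!\left(\frac{t_1+\cdots+t_n}{n}\right)\qquad (t_i\in D_n).
\]

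\textbf{Step 2: density of the image.} This is the main obstacle. We need $\psi(D_n)$ to be dense in $[a,b]$, so that $\psi$ extends to a continuous increasing bijection $[0,1]\to[a,b]$.

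Suppose instead there is a gap, i.e. points $u<v$ in $[a,b]$ with no value of $\psi$ strictly between them. Take sequences $s_m\uparrow s$ and $r_m\downarrow s$ in $D_n$ whose images converge to $u$ and $v$ respectively. Choose $x\in(u,v)$ and consider
\[
F(x,\psi(s_m),\dots,\psi(s_m)).
\]
Combine strict monotonicity in the first variable with reflexivity, $F(y,\dots,y)=y$, and the averaging identity. Then transport the jump via bisymmetry into a gap of length shrinking by a factor $1/n$, placed next to a different point. A counting argument on the nested images under the maps $t\mapsto (t+(n-1)c)/n$ then forces infinitely many disjoint gaps of comparable length inside $[a,b]$, which is impossible.

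I would first try mimicking the Burai--Kiss--Szokol binary argument. There, a jump of $\psi$ at $s$ is propagated, through $F(\cdot,\psi(c),\dots,\psi(c))$, to jumps at the points $(s+(n-1)c)/n$. Their total size stays bounded below, contradicting monotonicity.

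\textbf{Step 3: continuity of $F$ and the representation on $[a,b]$.} Extend $\psi$ to $[0,1]$ and set $\varphi=\psi^{-1}$. Let $x_i\in[a,b]$, and approximate each $\varphi(x_i)$ from below and from above by elements of $D_n$. Monotonicity of $F$ in each variable together with the averaging identity sandwiches $F(x_1,\dots,x_n)$ between values $\psi(\cdot)$ that converge to
\[
\psi\!\left(\frac{\sum\varphi(x_i)}{n}\right).
\]
Continuity of $\psi$ makes the sandwich work. This gives the quasi-arithmetic formula on $[a,b]^n$.

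\textbf{Step 4: all of $I$.} Exhaust $I$ by an increasing sequence of compact intervals $[a_j,b_j]$. The generators obtained on nested intervals agree up to an affine change, by uniqueness of quasi-arithmetic generators: the $n$-adic construction pins the generator down once the values at two points are fixed. Normalize them consistently to obtain a single $\varphi:I\to J$ with the required formula on $I^n$, and continuity of $F$ follows.
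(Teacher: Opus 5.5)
Your Steps 1, 3 and 4 are sound in outline. Step 2, which you yourself identify as the main obstacle, has a genuine gap: the proposed contradiction mechanism does not work.

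Write $X=\sup\{\psi(t):t\in D_n,\ t<s\}<Y=\inf\{\psi(t):t\in D_n,\ t>s\}$ for the gap at the cut $s$. Propagating it through $g_c(y)=F(y,\psi(c),\dots,\psi(c))$, $c\in D_n$, does produce jumps of $\psi$ at the points $(s+(n-1)c)/n$, each of size at least $g_c(Y)-g_c(X)>0$. But this is only a \emph{countable} family of disjoint gaps. Your sketch gives no lower bound on $g_c(Y)-g_c(X)$ uniform in $c$, and such a bound would essentially be a quantitative form of the regularity of $F$ you are trying to prove. A strictly increasing function may have jumps on a countable dense set with summable sizes. So ``total size bounded below'' is no contradiction: the total is only bounded \emph{above}, by $b-a$. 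Likewise, the claims that the gaps shrink by the factor $1/n$ or stay of ``comparable length'' are unfounded. Lengths in $[a,b]$ are governed by $F$, not by the affine maps $t\mapsto(t+(n-1)c)/n$ on $[0,1]$.

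The missing idea is to index the propagated gaps by an \emph{uncountable} set, so that disjointness alone gives the contradiction and no lengths are needed. The paper does this as follows. The extension $\widehat\psi(x)=\sup\{\psi(t):t\in D_n,\ t\le x\}$ is strictly increasing on $[0,1]$ with values in $\overline{\psi(D_n)}$. Hence this closed set is uncountable and has uncountably many two-sided accumulation points. For two such points $\alpha<\beta$, choose $p,q\in D_n$ with $\alpha<\psi(p)<\psi(q)<\beta$, and $d<s<D$ in $D_n$ with $(n-1)(D-d)<q-p$. Then $((n-1)D+p)/n<((n-1)d+q)/n$, and the averaging identity together with strict monotonicity gives
\[
F(Y,\dots,Y,\alpha)<F(\psi(D),\dots,\psi(D),\psi(p))<F(\psi(d),\dots,\psi(d),\psi(q))<F(X,\dots,X,\beta).
\]
Hence the non-empty open intervals $\bigl(F(X,\dots,X,t),F(Y,\dots,Y,t)\bigr)$, with $t$ running over the two-sided accumulation points, are pairwise disjoint. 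Uncountably many such intervals cannot exist. Note that $t$ here is an arbitrary point of the closure rather than a value $\psi(c)$; the averaging identity is used only at the auxiliary points $p,q,d,D$.

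With Step 2 replaced by this argument, the rest of your plan goes through:
\begin{itemize}
\item Step 1 is the paper's construction in ``well-definedness'' form. It requires distributing the upper-neighbour entries cyclically across the columns, so that after bisymmetry each column averages to one of two adjacent level-$k$ points.
\item Step 3 is the paper's sandwich argument.
\item Your gluing in Step 4 via affine uniqueness of generators is a valid alternative to the paper's reduction through local continuity and the Maksa--Mokken--Munnich theorem.
\end{itemize}
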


The proof is constructive and is modeled on the dyadic construction used in the binary case.
After reducing the problem to compact intervals $I=[a,b]$, we construct, for general $n$, a strictly increasing function on the set $D_n$ of $n$-adic rationals in $[0,1]$. The main algebraic step is to prove the identity
\[
F(f(t_1),\dots,f(t_n))=f\!\left(\frac{t_1+\cdots+t_n}{n}\right)
\qquad (t_i\in D_n),
\]
using symmetry and bisymmetry applied to suitably arranged arrays. Once this is known, a dense-domain gap argument forces
$f(D_n)$ to be dense in $I$, hence $f$ extends continuously to the whole interval and the
quasi-arithmetic representation follows.

A useful feature of the proof is the distinction between two dense sets. The full set $D_n$ is
needed for the complete $n$-variable averaging identity. On the other hand, the continuity
mechanism itself only uses comparisons of the two-point form
\[
\frac{(n-1)u+v}{n}.
\]
This leads naturally to the smaller set $E_n$ generated from $0$ and $1$ by
\[
\phi(u,v)=\frac{(n-1)u+v}{n}.
\]
We show that $E_n$ is always dense in $[0,1]$, while for $n\ge4$ it is a proper subset of $D_n$.
This separation clarifies why the dense-control part of the argument is simpler than the full
algebraic propagation of the averaging identity.

The same automatic-continuity mechanism also yields the following form of the
Kolmogorov--Nagumo--de Finetti theorem in which continuity is no longer assumed
a priori.

\begin{thm}[Kolmogorov--Nagumo--de Finetti theorem without continuity]
\label{thm:KND-without-continuity}
Let $I$ be a non-degenerate real interval, and let
$(M_n)_{n\ge1}$ be a family of symmetric means on $I$, with
$M_1=\id$. Assume that each $M_n$ is strictly increasing in each variable
and that the family satisfies the replacement axiom. Then there exist a
non-degenerate real interval $J$ and a continuous strictly monotone bijection
$\varphi:I\to J$ such that
\[
M_n(x_1,\dots,x_n)=
\varphi^{-1}\!\left(
\frac{\varphi(x_1)+\cdots+\varphi(x_n)}{n}
\right)
\]
for every $n\ge1$ and all $x_1,\dots,x_n\in I$. In particular, all the
means $M_n$ are automatically continuous.
\end{thm}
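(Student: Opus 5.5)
The plan is to reduce Theorem~\ref{thm:KND-without-continuity} to the Main Theorem~\ref{thm:main} applied to a single arity, and then use the replacement axiom to transfer the representation to every other arity. Fix $n=2$ (any fixed $n\ge2$ would do). I will first show that $M_2$ is reflexive, symmetric, bisymmetric and partially strictly increasing. Symmetry and strict increase are assumed. Reflexivity holds because each $M_n$ is a mean, so $\min\le M_n\le\max$ and hence $M_n(x,\dots,x)=x$. The one axiom that needs work is bisymmetry. From the replacement axiom, in its usual form $M_{m+k}(x_1,\dots,x_m,y_1,\dots,y_k)=M_{m+k}(\bar x,\dots,\bar x,y_1,\dots,y_k)$ with $\bar x=M_m(x_1,\dots,x_m)$, I will derive
\[
M_4(x,y,u,v)=M_4(a,a,b,b)=M_2(a,b),\qquad a=M_2(x,y),\ b=M_2(u,v).
\]
The identity $M_4(a,a,b,b)=M_2(a,b)$ needs its own argument. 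Put $c=M_2(a,b)$. Symmetry gives $M_4(a,a,b,b)=M_4(a,b,a,b)$. Replacing the first pair by $c$ and then the second pair by $c$ gives $M_4(c,c,c,c)=c$.

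By symmetry of $M_4$, the same quantity $M_4(x,y,u,v)=M_4(x,u,y,v)$ also equals $M_2(M_2(x,u),M_2(y,v))$. This is exactly bisymmetry of $M_2$.

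Theorem~\ref{thm:main} then yields $\varphi:I\to J$, continuous and strictly monotone, with $M_2=A_2^\varphi$, where $A_n^\varphi$ denotes the $n$-variable $\varphi$-quasi-arithmetic mean. The remaining step is to show that $M_n=A_n^\varphi$ for all $n$. The replacement axiom, together with symmetry and the same averaging trick as above, gives $M_{2^k}=A_{2^k}^\varphi$ by induction on $k$. To descend to a general $n<2^k$, fix $x_1,\dots,x_n$ and set $m=A_n^\varphi(x)$. Replacing the first $n$ entries by $\bar x=M_n(x)$ gives
\[
A_{2^k}^\varphi(x_1,\dots,x_n,m,\dots,m)=M_{2^k}(x_1,\dots,x_n,m,\dots,m)=M_{2^k}(\bar x,\dots,\bar x,m,\dots,m)=A_{2^k}^\varphi(\bar x,\dots,\bar x,m,\dots,m).
\]
On the left, the $\varphi$-average is $\bigl(n\varphi(m)+(2^k-n)\varphi(m)\bigr)/2^k=\varphi(m)$. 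On the right, it is $\bigl(n\varphi(\bar x)+(2^k-n)\varphi(m)\bigr)/2^k$. Injectivity of $\varphi$ then forces $\varphi(\bar x)=\varphi(m)$, hence $M_n(x)=A_n^\varphi(x)$. Continuity of all $M_n$ follows from the formula.

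I expect the main obstacle to be pinning down which exact form of the replacement axiom the paper adopts. If only a weaker variant is available, for example replacement only of the first $m$ entries in $M_{m+1}$, the derivation of bisymmetry has to be reorganized, possibly by working with $M_n$ for a suitable $n$ rather than $M_2$. The proof of $M_4(a,a,b,b)=M_2(a,b)$ should then be checked carefully against that exact formulation. Everything else is a direct application of Theorem~\ref{thm:main} and injectivity of $\varphi$.
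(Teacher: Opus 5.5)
Your proof is correct, and it takes a genuinely different route from the paper.

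\emph{The paper's route.} The paper never uses bisymmetry or Theorem~\ref{thm:main} here. It builds the coding function directly from the family on all of $\Q\cap[0,1]$: it sets $\psi(p/q)=M_q(a,\dots,a,b,\dots,b)$, with $q-p$ copies of $a$ and $p$ copies of $b$. It then uses the replication identity \eqref{eq:replication}, which is exactly your ``averaging trick'', to show two things: that $\psi$ is well defined, and that $M_n(\psi(z_1),\dots,\psi(z_n))=\psi((z_1+\dots+z_n)/n)$ on rationals. Finally it applies the dense-domain lemma, Proposition~\ref{prop:dense-domain}, to each $M_n$ separately. Consistency across arities is automatic, because every $M_n$ is coded by the same $\psi$ and its continuous extension is unique.

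\emph{Your route.} You extract bisymmetry of $M_2$ from the replacement axiom and invoke the single-arity theorem; for $n=2$ this is just Theorem~\ref{thm:BKSz}. You then propagate the generator purely algebraically. You go up to arities $2^k$ by replication, and down to arbitrary $n$ by padding with $A_n^\varphi(x)$ inside $M_{2^k}$ with $2^k>n$ and using injectivity of $\varphi$.

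\emph{What each buys.} Your argument shows that the family theorem is a formal corollary of the binary automatic-continuity theorem plus the replacement axiom. It also shows that $M_2$ alone determines the whole family. It handles non-compact $I$ directly, since Theorem~\ref{thm:main} is already stated for arbitrary intervals. The paper, by contrast, proves only the compact form and dismisses the family version of the compact reduction in one sentence. The paper's argument is more self-contained and bypasses bisymmetry and the $n$-adic construction entirely. Because $\Q\cap[0,1]$ is closed under all arithmetic averages, the family itself supplies the averaging identity, and one dense-domain argument gives continuity of every $M_n$ at once.

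Your worry about the exact form of the replacement axiom does not arise. Definition~\ref{def:replacement} is precisely the form you used: it holds for all $k,m\ge1$, and by symmetry it applies to any block.
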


The structure of the paper reflects the development of the method. Section \ref{sec:pre}
collects the basic notions and recalls the continuous representation theorems
used later. We also explain why it is enough to prove the main theorem on compact
intervals. Section \ref{sec:ternary} treats the ternary case as a warm-up and presents, in a
concrete low-dimensional form, how the defining bisymmetry identity is applied
to a suitably arranged array of variables. Section \ref{sec:product} gives an alternative
reduction in product arities, showing in particular how the case $n=4$
(generally $n=2^k$) follows from the binary theorem. These two sections are
included not only as special cases, but also to make visible the mechanism that
will later be used in the general construction. In Section \ref{sec:En-Dn} we introduce two
dense sets $D_n$ and $E_n$, separating the full algebraic domain $D_n$
needed for the averaging identity from the smaller control set $E_n$ needed
for continuity. Section \ref{sec:Dn-construction} contains the general $D_n$-construction and proves
the averaging identity on $D_n$. In Section \ref{sec:dense-lemma} we prove the dense-domain
continuity lemma, and in Section \ref{sec:proof-main} we complete the proof of
Theorem~\ref{thm:main}. Finally, Section \ref{sec:kolmogorov} proves
Theorem~\ref{thm:KND-without-continuity}, the continuity-free
Kolmogorov--Nagumo--de Finetti theorem for compatible families of strictly
increasing symmetric means. Section \ref{sec:open} discusses the remaining non-symmetric
problem.

The paper concludes with the principal remaining problem: whether symmetry can be omitted.
More precisely, it is open whether reflexivity, bisymmetry and partial strict monotonicity alone
force continuity. Thus the symmetric case is settled by the present paper, while
the genuinely non-symmetric case remains open. This is the natural boundary of
the present method. Without reflexivity, continuity is known to fail. In \cite{Kiss2026},
discontinuous bisymmetric strictly increasing operations are constructed in the
binary case and in arbitrary arity; the construction also yields genuinely
multivariate examples.
\section{Preliminaries}
\label{sec:pre}

Throughout the paper $I$ denotes a non-degenerate real interval. Later we
shall restrict our attention to compact intervals $I=[a,b]$, using
Lemma~\ref{lem:compact-reduction}.

\begin{defn}[Reflexivity]
A map $F:I^n\to I$ is called \emph{reflexive} if
\[
F(x,\dots,x)=x\qquad (x\in I).
\]
\end{defn}

\begin{defn}[Symmetry]
A map $F:I^n\to I$ is called \emph{symmetric} if
\[
F(x_1,\dots,x_n)=F(x_{\sigma(1)},\dots,x_{\sigma(n)})
\]
for all $x_1,\dots,x_n\in I$ and all permutations $\sigma\in S_n$.
\end{defn}

\begin{defn}[Bisymmetry]
A map $F:I^n\to I$ is called \emph{bisymmetric} if
\begin{align*}
&F\bigl(F(x_{1,1},\dots,x_{1,n}),F(x_{2,1},\dots,x_{2,n}),\dots,F(x_{n,1},\dots,x_{n,n})\bigr)\\
&\quad=
F\bigl(F(x_{1,1},\dots,x_{n,1}),F(x_{1,2},\dots,x_{n,2}),\dots,F(x_{1,n},\dots,x_{n,n})\bigr)
\end{align*}
for all $x_{i,j}\in I$.
\end{defn}

\begin{defn}[Partial strict increase]
A map $F:I^n\to\R$ is \emph{partially strictly increasing} if for every $j\in\{1,\dots,n\}$, the one-variable section
\[
t\longmapsto F(x_1,\dots,x_{j-1},t,x_{j+1},\dots,x_n)
\]
is strictly increasing on $I$ whenever the remaining variables are fixed.
\end{defn}

\begin{defn}[Mean]
A map $F:I^n\to I$ is a \emph{mean} if
\[
\min_i x_i\le F(x_1,\dots,x_n)\le \max_i x_i
\qquad (x_i\in I).
\]
It is a \emph{strict mean} if both inequalities are strict whenever the variables are not all equal.
\end{defn}

\begin{lem}\label{lem:strict-mean}
If $F:I^n\to I$ is reflexive and partially strictly increasing, then $F$ is a strict mean.
\end{lem}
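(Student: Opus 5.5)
The plan is to compare the given point coordinatewise with the two constant points built from the minimum and the maximum, and then use reflexivity at those constant points. Fix $x=(x_1,\dots,x_n)\in I^n$ and put $m=\min_i x_i$ and $M=\max_i x_i$; both lie in $I$. We pass from $(m,\dots,m)$ to $x$ by raising one coordinate at a time. At each stage the other coordinates are held fixed, so the one-variable section in that coordinate is strictly increasing. If $x_j=m$ the value does not change; if $x_j>m$ it increases strictly. Hence
\[
F(m,\dots,m)\le F(x_1,\dots,x_n),
\]
with strict inequality as soon as some $x_j>m$. Reflexivity gives $F(m,\dots,m)=m$. The upper bound $F(x)\le F(M,\dots,M)=M$ follows the same way, lowering one coordinate at a time from $(M,\dots,M)$ down to $x$.

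To see the chain concretely, define the intermediate points
\[
y^{(k)}=(x_1,\dots,x_k,m,\dots,m)\qquad (k=0,\dots,n).
\]
Then $y^{(0)}=(m,\dots,m)$ and $y^{(n)}=x$. The points $y^{(k-1)}$ and $y^{(k)}$ differ only in the $k$-th coordinate, which changes from $m$ to $x_k\ge m$. Therefore $F(y^{(k-1)})\le F(y^{(k)})$, with equality if and only if $x_k=m$, by strict monotonicity of the $k$-th section. Chaining these $n$ inequalities gives the lower bound.

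For strictness, suppose the $x_i$ are not all equal. Then $m<M$, so some $x_k=M>m$, and the $k$-th step of the chain is strict; this gives $m<F(x)$. In the same way, some $x_\ell=m<M$ makes the corresponding step of the upper chain strict, giving $F(x)<M$. This is exactly the definition of a strict mean.

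There is no real obstacle. The only care needed is that every intermediate point $y^{(k)}$ lies in $I^n$, which holds because its coordinates are entries $x_i$ or the value $m$, all in $I$. Symmetry and bisymmetry are not needed.
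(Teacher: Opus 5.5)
Your proof is correct and follows the same argument as the paper: you compare $x$ with the constant points $(m,\dots,m)$ and $(M,\dots,M)$, use reflexivity at those points, and get strictness by changing one coordinate at a time. Your explicit chain $y^{(k)}$ just spells out the ``coordinate by coordinate'' step that the paper leaves implicit.
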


\begin{proof}
Let $m=\min_i x_i$ and $M=\max_i x_i$. By partial monotonicity and reflexivity,
\[
m=F(m,\dots,m)\le F(x_1,\dots,x_n)\le F(M,\dots,M)=M.
\]
If the variables are not all equal, then at least one coordinate is strictly larger than $m$, and at least one coordinate is strictly smaller than $M$. Applying partial strict increase coordinate by coordinate gives
\[
m<F(x_1,\dots,x_n)<M.
\]
\end{proof}

\begin{defn}[Replacement axiom]\label{def:replacement}
Let $I$ be a real interval, and let
\[
M_n:I^n\to I\qquad(n\ge1)
\]
be a family of means with $M_1=\id$. We say that the family satisfies the
\emph{replacement axiom}\footnote{In parts of the early literature this condition
was called associativity. We use the term replacement axiom in order to avoid
confusion with the semigroup-theoretic meaning of associativity.} if, for all
$k,m\ge1$,
\begin{multline}\label{eq:replacement-axiom}
M_{k+m}(x_1,\dots,x_k,y_1,\dots,y_m)\\
=
M_{k+m}\bigl(
\underbrace{M_k(x_1,\dots,x_k),\dots,M_k(x_1,\dots,x_k)}_{k},
y_1,\dots,y_m
\bigr)
\end{multline}
for all $x_1,\dots,x_k,y_1,\dots,y_m\in I$. If the means $M_n$ are symmetric,
then the same replacement may be applied to any block of variables.
\end{defn}

We recall the classical continuous theorems in the form needed below.

\begin{thm}[Kolmogoroff--Nagumo--de Finetti]\label{thm:KND}
Let $I$ be a real interval, and let $(M_n)_{n\ge1}$ be a family of symmetric
means on $I$, with $M_1=\id$. Assume that each $M_n$ is continuous and
strictly increasing in each variable, and that the family satisfies the
replacement axiom. Then there exist a non-degenerate real interval $J$ and a
continuous strictly monotone bijection $\varphi:I\to J$ such that \[
M_n(x_1,\dots,x_n)=
\varphi^{-1}\!\left(\frac{\varphi(x_1)+\cdots+\varphi(x_n)}{n}\right)
\]
for every $n\ge1$ and all $x_1,\dots,x_n\in I$.
\end{thm}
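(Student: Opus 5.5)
This is the classical Kolmogorov--Nagumo--de Finetti theorem, recalled here as a tool. The natural route is to reduce it to the continuous Aczél--Maksa-type representation of a single bisymmetric operation, and then to propagate the resulting generator to all arities using the replacement axiom.

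\emph{Binary mean.} Let $F=M_2$. It is reflexive, because it is a mean, and it is symmetric, continuous and strictly increasing in each variable. To get bisymmetry we use the replacement axiom inside $M_4$. By symmetry and replacement applied to the blocks $\{x,y\}$ and $\{u,v\}$,
\[
M_4(x,y,u,v)=M_4(a,a,b,b), \qquad a=M_2(x,y),\ b=M_2(u,v).
\]
We next show $M_4(a,a,b,b)=M_2(a,b)$. Put $c=M_2(a,b)$. Applying replacement to the block $\{a,b\}$ twice gives $M_4(a,b,a,b)=M_4(c,c,c,c)=c$ by reflexivity, and $M_4(a,a,b,b)=M_4(a,b,a,b)$ by symmetry. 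Hence $M_2(M_2(x,y),M_2(u,v))=M_4(x,y,u,v)$. Since $M_4$ is symmetric, this expression is invariant under swapping $y$ and $u$, which is exactly bisymmetry of $M_2$. Aczél's theorem then gives a continuous strictly monotone $\varphi:I\to J$ such that $M_2$ is the $\varphi$-quasi-arithmetic mean.

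\emph{All arities.} Let $A_n$ denote the $\varphi$-quasi-arithmetic $n$-mean. The same block computation works for any $n=2^k$:
\[
M_{2^k}(\mathbf x,\mathbf y)=M_2\bigl(M_{2^{k-1}}(\mathbf x),M_{2^{k-1}}(\mathbf y)\bigr),
\]
so by induction $M_{2^k}=A_{2^k}$. For general $n$, choose $2^k\ge n$ and fill the remaining $2^k-n$ slots with $m=M_n(x_1,\dots,x_n)$. Replacement on the block $x_1,\dots,x_n$ turns every entry into $m$, so
\[
M_{2^k}(x_1,\dots,x_n,m,\dots,m)=m
\]
by reflexivity. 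The left-hand side equals $A_{2^k}(x_1,\dots,x_n,m,\dots,m)$, and solving for $\varphi(m)$ gives
\[
\varphi(m)=\frac{\varphi(x_1)+\cdots+\varphi(x_n)}{n},
\]
that is, $M_n=A_n$.

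\emph{Main obstacle.} The step I expect to need the most care is checking that the replacement axiom, which is stated for an initial block, can be applied to arbitrary blocks. Definition~\ref{def:replacement} already notes that symmetry permits this. One must also confirm that $J$ is non-degenerate, which holds because $\varphi$ is strictly monotone on the non-degenerate interval $I$.
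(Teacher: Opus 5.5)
Your proof is correct. The paper gives no proof of this statement; it is recalled as the classical result of Kolmogoroff, Nagumo and de Finetti. The closest argument in the paper is the proof of Theorem~\ref{thm:kolmogorov}, which proves the stronger continuity-free version by a different route. There the replication identity $M_{pm}(x_1,\dots,x_1,\dots,x_m,\dots,x_m)=M_m(x_1,\dots,x_m)$ (the general form of your computation $M_4(a,a,b,b)=M_2(a,b)$) is used to do two things. It defines the generator directly on $\Q\cap[0,1]$ by $\psi(p/q)=M_q(a,\dots,a,b,\dots,b)$, with $q-p$ copies of $a$ and $p$ copies of $b$. It also proves $M_n(\psi(z_1),\dots,\psi(z_n))=\psi((z_1+\cdots+z_n)/n)$ for all $n$ at once. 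Proposition~\ref{prop:dense-domain} then supplies density and the continuous extension. That route uses neither bisymmetry of $M_2$ nor Aczel's theorem, but it does use strict monotonicity of every $M_q$, and it works on a compact $[a,b]$. You instead extract bisymmetry of $M_2$ from the family, hand all the analysis to Theorem~\ref{thm:Aczel}, and propagate the generator purely algebraically by dyadic induction plus the padding trick. This is shorter and works verbatim on arbitrary intervals, with no compact reduction (whose last step in Lemma~\ref{lem:compact-reduction} itself calls on a continuous representation theorem). It also uses continuity and strict monotonicity only of $M_2$; the other $M_n$ enter only through symmetry, reflexivity and replacement. Consequently, if you replace Theorem~\ref{thm:Aczel} by Theorem~\ref{thm:BKSz} in your first step, the same argument gives Theorem~\ref{thm:KND-without-continuity} with strict monotonicity required of $M_2$ alone. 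The paper's route, in exchange, is self-contained and yields one explicit generator for all arities simultaneously. The one step you compress is $M_{2N}(A,\dots,A,B,\dots,B)=M_2(A,B)$ with $N$ copies of each, used in the dyadic induction. It follows exactly as in your case $N=2$: group the entries into $N$ blocks $(A,B)$ and apply replacement to each.
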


\begin{thm}[Aczel, binary case]\label{thm:Aczel}
Let $I$ be a real interval. A map $F:I^2\to I$ is continuous, reflexive, symmetric, bisymmetric and partially
strictly increasing if and only if there
exist a non-degenerate real interval $J$ and a continuous strictly monotone
bijection $\varphi:I\to J$ such that
\[
F(x,y)=\varphi^{-1}\!\left(\frac{\varphi(x)+\varphi(y)}{2}\right)
\qquad (x,y\in I).
\]
\end{thm}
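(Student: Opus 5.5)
The ``if'' direction is a direct verification. If $F(x,y)=\varphi^{-1}\bigl((\varphi(x)+\varphi(y))/2\bigr)$, then reflexivity and symmetry are immediate. Bisymmetry holds because both sides equal $\varphi^{-1}$ of the average of the four values $\varphi(x),\varphi(y),\varphi(u),\varphi(v)$. Continuity and partial strict increase follow since $\varphi$ is a continuous strictly monotone bijection. The work is in the ``only if'' direction, which I would prove by a dyadic construction.

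\textbf{Compact case.} First take $I=[a,b]$ with $a<b$, and build a map $f$ on the dyadic rationals $D\subset[0,1]$.
\begin{enumerate}
\item Set $f(0)=a$, $f(1)=b$. For $t=(2k+1)/2^{m+1}$ define recursively $f(t)=F\bigl(f(k/2^m),f((k+1)/2^m)\bigr)$.
\item The key algebraic step is the averaging identity
\[
F\bigl(f(s),f(t)\bigr)=f\Bigl(\frac{s+t}{2}\Bigr)\qquad(s,t\in D).
\]
I would prove it by induction on the common dyadic level $m$ of $s$ and $t$. If $s,t$ are both of level $<m$, the induction hypothesis applies. Otherwise write each of $s,t$ of exact level $m$ as the midpoint of two neighbours of level $m-1$. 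Then rewrite $F(f(s),f(t))$ as an expression $F(F(\cdot,\cdot),F(\cdot,\cdot))$, use bisymmetry and symmetry to regroup the four inner arguments, and apply the induction hypothesis to the regrouped pairs. Where a point must be written as $F(x,x)$, reflexivity supplies this.
\item I expect this bookkeeping to be the main obstacle. One must handle the cases $s=t$, exactly one of $s,t$ of top level, and both of top level, so that every regrouped pair has lower level.
\end{enumerate}

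\textbf{Monotonicity and density.} Strict increase of $f$ on $D$ follows from the identity by induction: $F$ is a strict mean (Lemma~\ref{lem:strict-mean}) and partially strictly increasing. Next I would show that $f(D)$ is dense in $[a,b]$. Suppose $f(D)$ leaves a gap. Then there are $t_0\in[0,1]$ and values $\alpha=\sup\{f(s):s<t_0\}<\beta=\inf\{f(s):s>t_0\}$ (with a one-sided version at a dyadic point or an endpoint). Take dyadic $s_k\uparrow t_0$ and $r_k\downarrow t_0$; their midpoints straddle $t_0$, or lie arbitrarily close to it. Continuity of $F$ and the averaging identity then give $F(\alpha,\beta)$ as a limit of values on both sides of the gap, so $F(\alpha,\beta)\in\{\alpha,\beta\}$. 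This contradicts $F$ being a strict mean.

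\textbf{Extension and representation on $[a,b]$.} Hence $f$ extends to a continuous strictly increasing bijection $[0,1]\to[a,b]$. Put $\varphi=f^{-1}$. By density of $D$ and continuity of $F$ and $f$, the averaging identity extends to all $s,t\in[0,1]$, which is exactly the claimed representation on $[a,b]$.

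\textbf{General interval.} For arbitrary $I$, exhaust it by an increasing sequence of compact intervals $[a_k,b_k]$ and obtain generators $\varphi_k$. The generator of a quasi-arithmetic mean is unique up to an affine change $\varphi\mapsto c\varphi+d$ with $c\neq0$. Since the $\dyadic$ points used in step 1 do not depend on $k$ beyond the endpoints, the cleaner way to use this is: normalise every $\varphi_k$ by fixing its values at two fixed points of $[a_1,b_1]$. The normalised $\varphi_k$ are then compatible on overlaps, and they glue to a single $\varphi$ on $I$ with convex image $J$.
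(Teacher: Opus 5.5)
The paper does not prove this statement: Theorem~\ref{thm:Aczel} is quoted as a classical result. The natural comparison is therefore with the paper's own machinery specialised to $n=2$. Your argument is correct, and it is essentially that machinery with one genuine simplification.

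The dyadic recursion and the averaging identity are Theorem~\ref{thm:Dn-identity} for $n=2$. When both $s,t$ have exact top level, the regrouping has to be the crossed one, pairing $f(k/2^{m-1})$ with $f((l+1)/2^{m-1})$. The straight one yields the non-adjacent points $(k+l)/2^m$ and $(k+l+2)/2^m$. The crossed pairing is exactly the paper's cyclic placement of the $+1$-entries. Strict increase already follows from the recursive definition and Lemma~\ref{lem:strict-mean} by interleaving; the identity is not needed for it.

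The real difference is the density step. You use the assumed joint continuity of $F$, so $F(\alpha,\beta)$ is a limit of values of $f$ lying outside the gap. Hence it cannot lie strictly between $\alpha$ and $\beta$, which contradicts Lemma~\ref{lem:strict-mean}. Proposition~\ref{prop:dense-domain} replaces this by the uncountable family of disjoint intervals $\Phi_t$, precisely because continuity is not available there. The paper's route thus yields the stronger Theorem~\ref{thm:BKSz}. Yours yields only Aczel's theorem, which is all that is claimed, but with a much shorter argument.

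One detail should be made explicit. If $t_0$ is dyadic, midpoints equal to $t_0$ would give the limit $f(t_0)$, which may lie in $(\alpha,\beta)$. So choose $s_k,r_k$ with all midpoints strictly on one side of $t_0$. When $t_0$ is an endpoint, take $s_k\equiv 0$ or $r_k\equiv 1$ instead.

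For general $I$, your gluing via affine uniqueness of generators is the alternative mentioned in the Remark after Lemma~\ref{lem:compact-reduction}. Here it is the right choice. That lemma ends by invoking Theorem~\ref{thm:Maksa}, whose case $n=2$ is the statement being proved, so using it would be circular. The uniqueness fact needs its one-line justification: if $\varphi,\psi$ generate the same mean on $[a_k,b_k]$, then $\psi\circ\varphi^{-1}$ is a continuous solution of Jensen's equation on an interval, hence affine.

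Finally, the statement tacitly requires $I$ to be non-degenerate, since no bijection maps a single point onto a non-degenerate $J$. You assume this, consistent with the paper's standing convention.
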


\begin{thm}[Maksa; Maksa--Mokken--Munnich]\label{thm:Maksa}
Let $I$ be a real interval and let $F:I^n\to I$ be continuous, reflexive, symmetric, bisymmetric and partially
strictly increasing. Then there exist a
non-degenerate real interval $J$ and a continuous strictly monotone bijection
$\varphi:I\to J$ such that
\[
F(x_1,\dots,x_n)=\varphi^{-1}\!\left(\frac{\varphi(x_1)+\cdots+\varphi(x_n)}{n}\right)
\qquad (x_i\in I).
\]
\end{thm}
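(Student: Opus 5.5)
Since this is the classical continuous theorem, the paper can simply cite \cite{Maksa1999,MaksaMokkenMunnich2000}. If I were to reprove it, I would follow the Aczel-type construction on $n$-adic rationals, using continuity wherever the main theorem later has to work without it. I would first reduce to a compact subinterval $[a,b]\subset I$ with $a<b$. A representation on $[a,b]$ with generator $\varphi$ is unique up to affine change $\varphi\mapsto \alpha\varphi+\beta$, because the $n$-adic values below are forced. So representations on an increasing sequence of compact intervals exhausting $I$ can be normalized to agree on overlaps and glued into a global $\varphi$.

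On $[a,b]$ I would build a strictly increasing $f:D_n\to[a,b]$, where $D_n$ is the set of $n$-adic rationals in $[0,1]$, by recursion on the level $k$ of the denominator $n^k$. Set $f(0)=a$ and $f(1)=b$. At level $k+1$, write each point as $(t_1+\dots+t_n)/n$ with the $t_i$ of level $\le k$, and define $f$ there to be $F(f(t_1),\dots,f(t_n))$. The key claim, proved by induction on $k$, is the averaging identity
\[
F(f(t_1),\dots,f(t_n))=f\!\left(\frac{t_1+\cdots+t_n}{n}\right)\qquad(t_i\in D_n).
\]
It contains well-definedness, since different representations of the same average must give the same value. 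Its proof uses reflexivity and symmetry, and applies bisymmetry to an $n\times n$ array whose rows are decompositions of the $t_i$ at lower level. One rearranges the entries so that the column averages are also lower-level points, and matches the result via the inductive hypothesis. Strict monotonicity of $f$ then follows from partial strict increase and the strict-mean property (Lemma~\ref{lem:strict-mean}), again by induction.

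Next I would show that $f(D_n)$ is dense in $[a,b]$. For $F$ continuous this is easy. If there were a gap $(f(s^-),f(s^+))$ at some $s\in[0,1]$, it can be hit by an $F$-value of points close to it, and the intermediate value property of the continuous map $x\mapsto F(x,\dots,x,y)$ then forces values of $f$ inside the gap, which contradicts monotonicity. Consequently $f$ extends to a continuous strictly increasing bijection $[0,1]\to[a,b]$. The averaging identity passes to the limit by continuity of $F$, and $\varphi=f^{-1}$ gives the formula.

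The main obstacle is the combinatorial step: proving the averaging identity on all of $D_n$. The difficulty is that averages of $n$ points of level $k$ can have arbitrary numerators, so the array for bisymmetry must be chosen carefully. I would first try an induction in which each $t_i$ is written with a common denominator $n^{k}$, with the numerators distributed in base-$n$ digits across the columns so that every row and every column average lies at a lower level.
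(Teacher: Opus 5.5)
The paper gives no proof of this theorem. It is quoted from Maksa and Maksa--Mokken--Munnich, exactly as your first sentence suggests. Your reproof outline follows the route the paper uses for Theorem~\ref{thm:main}, and its peripheral steps are sound. Gluing by affine uniqueness is the alternative mentioned in the Remark after Lemma~\ref{lem:compact-reduction}. Density is indeed easy under continuity: if $(X,Y)$ is a component of $[a,b]\setminus\overline{f(D_n)}$ and $f(d_k)\to X$, $f(e_k)\to Y$ with $d_k,e_k\in D_n$, then $f\bigl(((n-1)d_k+e_k)/n\bigr)=F(f(d_k),\dots,f(d_k),f(e_k))\to F(X,\dots,X,Y)\in(X,Y)$, so some value of $f$ falls in the gap. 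This replaces the uncountable-family argument of Proposition~\ref{prop:dense-domain}.

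The genuine gap is where you yourself place the main obstacle: the averaging identity on $D_n$ is not proved, and the mechanism you sketch does not close. Suppose $f$ on $D_{n,m+2}$ is defined through an arbitrary representation $(t_1+\dots+t_n)/n$ with $t_i\in D_{n,m+1}$. Then well-definedness there \emph{is} the identity for inputs in $D_{n,m+1}$, so the induction needs a canonical target. Apply bisymmetry to an array whose rows decompose the $t_i$ into points of $D_{n,m}$. Via the inductive hypothesis on the columns, this merely turns $F(f(t_1),\dots,f(t_n))$ into $F(f(c_1),\dots,f(c_n))$, where the column averages $c_j$ again lie in $D_{n,m+1}$ and have the same sum as the $t_i$. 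They cannot all lie in $D_{n,m}$ in general: otherwise $\sum_i t_i/n=\sum_j c_j/n$ would lie in $D_{n,m+1}$, whereas it is generally a new point of $D_{n,m+2}$. So you are back to an expression of the same kind.

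The missing idea, carried out in Theorem~\ref{thm:Dn-identity}, is the canonical recursion \eqref{eq:Dn-def} together with a balanced placement of the upper entries. Write $x_i=(k_in+\delta_i)/n^{m+1}$ and expand $f(x_i)$ as $F$ of $n-\delta_i$ copies of $f(k_i/n^m)$ and $\delta_i$ copies of $f((k_i+1)/n^m)$. In row $i$, put the $\delta_i$ upper entries into the next $\delta_i$ positions after those used by rows $1,\dots,i-1$, cyclically modulo $n$. Since $\delta_i\le n-1$, this is a permutation within each row. If $\delta_1+\dots+\delta_n=qn+\delta$ with $0\le\delta\le n-1$, then exactly $\delta$ columns contain $q+1$ upper entries and the other $n-\delta$ contain $q$. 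After bisymmetry, the inductive hypothesis evaluates the columns to the two \emph{consecutive} values $f(K/n^{m+1})$ and $f((K+1)/n^{m+1})$, where $K=k_1+\dots+k_n+q$, with multiplicities $n-\delta$ and $\delta$. This is literally the recursive definition of $f((Kn+\delta)/n^{m+2})=f((x_1+\dots+x_n)/n)$.

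So what matters is not that the columns have lower level, but that balancing forces them into the canonical two-consecutive-values form. Your base-$n$ digit idea points this way, but without the $q$ versus $q+1$ balancing it does not reach that form. This single induction yields well-definedness, agreement with old values (the case $\delta=0$, by reflexivity) and the identity. After that, the rest of your outline goes through.
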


\begin{thm}[Burai--Kiss--Szokol]\label{thm:BKSz}
Let $I$ be a non-degenerate real interval. If $F:I^2\to I$ is reflexive, symmetric, bisymmetric
and partially strictly increasing, then $F$ is continuous. Consequently $F$ is a quasi-arithmetic mean.
\end{thm}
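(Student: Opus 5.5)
The plan is to build a ``dyadic inverse'' of the expected generator $\varphi$ and to show that its range cannot have gaps. First I reduce to a compact interval $I=[a,b]$. Continuity is local, and on each compact subinterval $[a,b]\subset I$ the restriction of $F$ maps $[a,b]^2$ into $[a,b]$, because $F$ is a mean by Lemma~\ref{lem:strict-mean}. Once continuity is known on every compact subinterval, Theorem~\ref{thm:Aczel} gives the quasi-arithmetic form. Let $D$ denote the dyadic rationals in $[0,1]$. I would define $f:D\to[a,b]$ recursively by
\[
f(0)=a,\qquad f(1)=b,\qquad f\!\left(\tfrac{2j+1}{2^{k+1}}\right)=F\!\left(f\!\left(\tfrac{j}{2^{k}}\right),f\!\left(\tfrac{j+1}{2^{k}}\right)\right).
\]

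The algebraic step is to prove the averaging identity
\[
F(f(s),f(t))=f\!\left(\tfrac{s+t}{2}\right)\qquad (s,t\in D).
\]
I would argue by induction on the common dyadic level $k$ of $s$ and $t$, and then on $|s-t|\,2^k$. If $s$ and $t$ are adjacent at level $k$, the identity is the definition of $f$. If $s=t$, it follows from reflexivity. Otherwise I write $s=(s_1+s_2)/2$ and $t=(t_1+t_2)/2$ with all $s_i,t_i$ at level $k-1$, chosen so that $(s_1+t_1)/2$ and $(s_2+t_2)/2$ have the same parity structure. Then bisymmetry and symmetry give
\[
F(f(s),f(t))=F\bigl(F(f(s_1),f(s_2)),F(f(t_1),f(t_2))\bigr)=F\bigl(F(f(s_1),f(t_1)),F(f(s_2),f(t_2))\bigr),
\]
and the induction hypothesis reduces the right-hand side to $f$ evaluated at the midpoint. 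Strict monotonicity of $F$ in each variable, together with $f(0)<f(1)$, then shows by induction that $f$ is strictly increasing on $D$.

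The main obstacle is density: I must show that $f(D)$ is dense in $[a,b]$. Suppose it is not. Then monotonicity of $f$ produces a point $t_0\in[0,1]$ with
\[
\alpha=\sup_{s<t_0}f(s)\;<\;\beta=\inf_{r>t_0}f(r),
\]
with the obvious one-sided modification at the endpoints. The idea is to exploit this one-sided gap through the averaging identity. I would take $s<t_0<r$ with $r-t_0$ very small and $t_0-s$ equal to a fixed multiple of $r-t_0$, so that $(s+r)/2$ lies on a prescribed side of $t_0$. From
\[
F(f(s),f(r))=f\!\left(\tfrac{s+r}{2}\right)
\]
and partial strict monotonicity I would derive, for all such $s$, bounds of the forms $F(f(s),\beta)\le\alpha$ and, symmetrically, $F(\alpha,f(r))\ge\beta$. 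Iterating these at different scales should then produce values of $F$ that must lie in $(\alpha,\beta)$ and also outside it. Lemma~\ref{lem:strict-mean} makes this precise, since $F(\alpha,\beta)$ lies strictly between $\alpha$ and $\beta$.

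Turning these one-sided bounds into a genuine contradiction is delicate, because no continuity of $F$ is available to pass to limits. I expect this is where reflexivity must be used essentially, for example through $F(x,x)=x$ applied to nested midpoints on both sides of $t_0$. Once density holds, $f$ extends uniquely to a continuous, strictly increasing bijection $f:[0,1]\to[a,b]$. Take $\varphi=f^{-1}$. Then $F(x,y)$ and $\varphi^{-1}\bigl((\varphi(x)+\varphi(y))/2\bigr)$ agree on the dense set $f(D)^2$, and both are increasing in each variable. Squeezing $F$ between its values at dense points from below and above shows that they agree everywhere. This gives continuity of $F$ and the quasi-arithmetic form.
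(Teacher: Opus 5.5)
Your architecture (compact reduction, dyadic coding $f$, the averaging identity $F(f(s),f(t))=f(\frac{s+t}{2})$ via bisymmetry, monotonicity, density of $f(D)$, final squeeze) is exactly the $n=2$ case of Sections~\ref{sec:Dn-construction}--\ref{sec:proof-main}. The algebraic part is fine: when $s$ and $t$ both have odd numerators, pair $s_1$ with the larger of $t_1,t_2$ so that the two new points coincide; otherwise they are adjacent. So one bisymmetry step closes the induction.

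The genuine gap is the density of $f(D)$, which you do not prove. Your bounds are $F(f(s),\beta)\le\alpha$ for dyadic $s<t_0$ and $F(\alpha,f(r))\ge\beta$ for dyadic $r>t_0$. Combined with $\alpha<F(\alpha,\beta)<\beta$, they only say that $F(\cdot,\beta)$ jumps at $\alpha$ and $F(\alpha,\cdot)$ jumps at $\beta$. That restates the discontinuity rather than contradicting anything. Iterating the identity at other scales only transports the gap to countably many further cuts such as $(t_0+p)/2$, which is not contradictory either. Nor is reflexivity the missing ingredient: the paper's density lemma, Proposition~\ref{prop:dense-domain}, uses only partial strict monotonicity of $F$ and the averaging identity on a dense set.

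What is missing is a cardinality argument. Let $X<Y$ be the gap at the cut $z$. Suppose $t_1<t_2$ and there are $p,q\in D$ with $t_1<f(p)<f(q)<t_2$. Choose dyadic $d<z<d'$ with $d'-d<q-p$. Then $\frac{d'+p}{2}<\frac{d+q}{2}$, and since $f(d)<X<Y<f(d')$,
\[
F(Y,t_1)<F(f(d'),f(p))=f\bigl(\tfrac{d'+p}{2}\bigr)<f\bigl(\tfrac{d+q}{2}\bigr)=F(f(d),f(q))<F(X,t_2).
\]
If $z$ is an endpoint, say $z=0$, take $d=0$ and $X=a=f(0)$; the chain still holds. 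Hence the non-empty open intervals $(F(X,t),F(Y,t))\subset I$ are pairwise disjoint as $t$ runs through the two-sided accumulation points of $\overline{f(D)}$.

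There are uncountably many such $t$. The map $\widehat f(x)=\sup\{f(s):s\in D,\ s\le x\}$ is strictly increasing on $[0,1]$ with values in $\overline{f(D)}$, so this closed set is uncountable. A closed set has only countably many isolated or one-sided accumulation points, so uncountably many two-sided ones remain. Uncountably many disjoint open intervals in $I$ is impossible, so $f(D)$ is dense. Without this step, or an equivalent one, your proof does not yield continuity.
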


Finally, we record the reduction, which allows us to carry out the main construction
only on compact intervals.

\begin{lem}[Reduction to compact intervals]\label{lem:compact-reduction}
Assume that, for every non-degenerate compact interval $K$, every reflexive,
symmetric, bisymmetric and partially strictly increasing map $G:K^n\to K$ is
continuous. Then Theorem~\ref{thm:main} holds for arbitrary non-degenerate real
intervals.
\end{lem}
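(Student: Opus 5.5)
We reduce to the compact case in two steps: first continuity on all of $I$, then the representation.

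\emph{Step 1 (continuity on $I$).} Let $F:I^n\to I$ be reflexive, symmetric, bisymmetric and partially strictly increasing. Fix a point $(x_1,\dots,x_n)\in I^n$. Since $I$ is non-degenerate, we can choose a non-degenerate compact interval $K=[c,d]\subseteq I$ that contains a neighbourhood (relative to $I$) of every coordinate $x_i$. Concretely, take $c<\min_i x_i$ unless $\min_i x_i$ is the left endpoint of $I$, and similarly for $d$.

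By Lemma~\ref{lem:strict-mean}, $F$ is a mean, so $F(K^n)\subseteq[\min,\max]\subseteq K$. Hence the restriction $G=F|_{K^n}$ maps $K^n$ into $K$. All four properties (reflexivity, symmetry, bisymmetry, partial strict increase) are identities or inequalities among values at points of $K$. Bisymmetry is the only one that needs a check: all inner values lie in $K$, so both sides of the bisymmetry identity are computed inside $K$. Therefore $G$ satisfies the hypotheses, and by assumption $G$ is continuous.

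Because $K^n$ contains a relative neighbourhood of $(x_1,\dots,x_n)$ in $I^n$, $F$ is continuous at this point. The point was arbitrary, so $F$ is continuous on $I^n$.

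\emph{Step 2 (representation).} Now $F$ is continuous, reflexive, symmetric, bisymmetric and partially strictly increasing on $I$. Theorem~\ref{thm:Maksa} applies directly and yields a non-degenerate interval $J$ and a continuous strictly monotone bijection $\varphi:I\to J$ giving the quasi-arithmetic form. This is exactly the conclusion of Theorem~\ref{thm:main}.

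The main obstacle is mild: choosing $K$ correctly when some coordinate is an endpoint of $I$ (if $I$ is not open), so that $K^n$ is still a relative neighbourhood. The fact that the mean property keeps $F(K^n)$ inside $K$ is essential, since it makes $G$ a self-map of $K^n$ to which the hypothesis can be applied.
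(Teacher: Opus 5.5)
Your proposal is correct and follows the paper's proof essentially step for step. You restrict $F$ to a compact $K$ chosen so that $K^n$ is a relative neighbourhood of the given point, using the mean property from Lemma~\ref{lem:strict-mean} to make $F|_{K^n}$ a self-map of $K^n$, deduce continuity of $F$ at that point, and then invoke Theorem~\ref{thm:Maksa} for the representation.
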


\begin{proof}
Let $I$ be a non-degenerate real interval and let $F:I^n\to I$ be reflexive,
symmetric, bisymmetric and partially strictly increasing. By
Lemma~\ref{lem:strict-mean}, $F$ is a strict mean. Hence, for every
non-degenerate compact subinterval $K\subset I$, the restriction $F|_{K^n}$
maps $K^n$ into $K$ and satisfies the same assumptions.

By the assumed compact case, $F|_{K^n}$ is continuous for every such compact
subinterval $K$. We prove that $F$ is continuous on $I^n$.

Let $x=(x_1,\dots,x_n)\in I^n$. Choose a non-degenerate compact subinterval
$K\subset I$ such that each $x_i$ belongs to the interior of $K$ relative
to $I$. This is possible as follows. If $I$ contains a left endpoint and
some $x_i$ is equal to it, then we choose $K$ with the same left endpoint.
Similarly, if $I$ contains a right endpoint and some $x_i$ is equal to it,
then we choose $K$ with the same right endpoint. In all other directions we
choose the endpoints of $K$ strictly beyond the finitely many points
$x_1,\dots,x_n$, inside $I$.

Then $K^n$ contains a neighbourhood of $x$ in the relative topology of
$I^n$. Since $F$ agrees on this neighbourhood with the continuous map
$F|_{K^n}$, the map $F$ is continuous at $x$. As $x\in I^n$ was arbitrary,
$F$ is continuous on $I^n$.

The continuous representation theorem of Maksa and of Maksa--Mokken--Munnich,
Theorem~\ref{thm:Maksa}, now applies. Therefore there exist a non-degenerate real
interval $J$ and a continuous strictly monotone bijection $\varphi:I\to J$
such that
\[
F(x_1,\dots,x_n)=
\varphi^{-1}\!\left(\frac{\varphi(x_1)+\cdots+\varphi(x_n)}{n}\right)
\qquad (x_1,\dots,x_n\in I).
\]
Thus Theorem~\ref{thm:main} follows from the compact case.
\end{proof}

\begin{rem}
In the last step of the proof of Lemma~\ref{lem:compact-reduction} we invoked
the continuous representation theorem of Maksa and of Maksa--Mokken--Munnich.
This is the shortest way to pass from local continuity to the global
quasi-arithmetic representation. Alternatively, one could avoid this invocation
by using the standard affine uniqueness of quasi-arithmetic generators: if two
continuous strictly monotone generators represent the same quasi-arithmetic mean
on an interval, then they differ by a non-zero affine change of scale. Hence the
local generators obtained on compact subintervals can be normalized consistently
and pasted together. We shall not need this alternative argument.
\end{rem}

\medskip
\noindent\textbf{Standing assumption for the rest of the paper.}
By Lemma~\ref{lem:compact-reduction}, it is enough to establish automatic
continuity on compact intervals. Therefore, from this point on, all constructive
arguments will be carried out under the standing assumption that
\[
I=[a,b]
\]
is a non-degenerate compact interval.
\medskip

\section{A warm-up: the ternary case}
\label{sec:ternary}

Before proving the general theorem, we discuss the ternary case separately. This special case already contains
the main combinatorial idea: the coding function is constructed on triadic rationals, and bisymmetry is used
as a row--column interchange.

Let $F:[a,b]^3\to[a,b]$ be reflexive, symmetric, bisymmetric and partially strictly increasing. Set
\[
D_3=\left\{\frac{m}{3^r}: r  \ge 0,\ 0\le m\le 3^r\right\},
\qquad
D_{3,r}=\left\{\frac{m}{3^r}:0\le m\le 3^r\right\}.
\]
We define $f:D_3\to[a,b]$ recursively. Let $f(0)=a$ and $f(1)=b$. (This convention we use throughout the construction.) If $f$ is already defined on $D_{3,r}$,
define, for $s\in\{0,\dots,3^r-1\}$ and $\delta\in\{0,1,2\}$,
\begin{equation}\label{eq:ternary-def}
f\!\left(\frac{3s+\delta}{3^{r+1}}\right)=
F\Bigl(
\underbrace{f\!\left(\frac{s}{3^r}\right),\dots,f\!\left(\frac{s}{3^r}\right)}_{3-\delta},
\underbrace{f\!\left(\frac{s+1}{3^r}\right),\dots,f\!\left(\frac{s+1}{3^r}\right)}_{\delta}
\Bigr).
\end{equation}
For $\delta=0$ this agrees with the previous value by reflexivity.

\begin{prop}\label{prop:ternary-monotone}
The function $f:D_3\to[a,b]$ defined by \eqref{eq:ternary-def} is strictly increasing.
\end{prop}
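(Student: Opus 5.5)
We argue by induction on the level $r$ and prove that $f$ restricted to $D_{3,r}$ is strictly increasing for every $r\ge0$. Since $D_3=\bigcup_r D_{3,r}$ with $D_{3,r}\subset D_{3,r+1}$, any two points $t<t'$ of $D_3$ lie in a common $D_{3,r}$. Hence monotonicity on every level gives strict monotonicity on all of $D_3$.

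First we check that the recursion is consistent, so that $f$ is well defined. A point of $D_{3,r+1}$ has a unique representation $\frac{3s+\delta}{3^{r+1}}$ with $0\le s\le 3^r-1$ and $\delta\in\{0,1,2\}$, except for the right endpoint $1=\frac{3\cdot 3^r}{3^{r+1}}$. The right endpoint keeps its value $b$ by the convention $f(1)=b$. Points with $\delta=0$ are old points $\frac{s}{3^r}\in D_{3,r}$, and reflexivity gives
\[
F\bigl(f(s/3^r),f(s/3^r),f(s/3^r)\bigr)=f(s/3^r),
\]
so their values are unchanged. Thus the definition at level $r+1$ extends the one at level $r$. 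The base case $r=0$ is just $f(0)=a<b=f(1)$.

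For the inductive step, assume $f$ is strictly increasing on $D_{3,r}$ and fix $s\in\{0,\dots,3^r-1\}$. Put $p=f(s/3^r)$ and $q=f((s+1)/3^r)$, so $p<q$ by the hypothesis. The four consecutive points of $D_{3,r+1}$ in $[s/3^r,(s+1)/3^r]$ receive the values
\[
F(p,p,p)=p,\quad F(p,p,q),\quad F(p,q,q),\quad F(q,q,q)=q.
\]
Each time one coordinate is replaced by the larger value $q$ (symmetry lets us place the replaced coordinates anywhere), and partial strict increase gives
\[
p=F(p,p,p)<F(p,p,q)<F(p,q,q)<F(q,q,q)=q.
\]
Alternatively, Lemma~\ref{lem:strict-mean} yields $p<F(p,p,q)<q$ and $p<F(p,q,q)<q$, and a single application of strict increase in one coordinate orders these two values. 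So $f$ is strictly increasing on each closed subinterval $[s/3^r,(s+1)/3^r]\cap D_{3,r+1}$.

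These subintervals are ordered consecutively and overlap only at the common endpoints $s/3^r$. Therefore the values at level $r+1$ form a strictly increasing chain, which completes the induction.

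Bisymmetry is not needed for this statement. The only delicate point is the bookkeeping that the overlapping endpoints get consistent values, and that is handled by reflexivity as above. I therefore expect no real obstacle here. The substantive work, the averaging identity, comes later in the paper.
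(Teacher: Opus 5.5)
Your proof is correct and follows essentially the same route as the paper: an induction on the level $r$ that uses only reflexivity (so the old values are kept) and partial strict increase of $F$. The difference is bookkeeping. You build the chain $p<F(p,p,q)<F(p,q,q)<q$ inside each parent interval and glue consecutive chains at their shared endpoints. The paper instead compares two arbitrary points directly, in two cases: $s=t$, where one coordinate is replaced by a larger value, and $s<t$, where the induction hypothesis gives coordinatewise domination.
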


\begin{proof}
The proof is by induction on $r$. It is clear on $D_{3,0}=\{0,1\}$. Assume strict increase on $D_{3,r}$.
Let
\[
x=\frac{3s+\delta}{3^{r+1}},\qquad y=\frac{3t+\epsilon}{3^{r+1}},\qquad x<y.
\]
If $s=t$, then $\delta<\epsilon$, and the defining expression for $f(y)$ is obtained from that for $f(x)$
by replacing at least one copy of $f(s/3^r)$ by the larger value $f((s+1)/3^r)$. Strict increase of $F$ gives
$f(x)<f(y)$. If $s<t$, then
\[
\frac{s}{3^r}<\frac{t}{3^r},\qquad \frac{s+1}{3^r}\le \frac{t}{3^r},
\]
and the induction hypothesis again gives coordinatewise domination, with at least one strict inequality.
Thus $f(x)<f(y)$.
\end{proof}

\begin{thm}\label{thm:ternary-identity}
For all $x,y,z\in D_3$,
\[
F(f(x),f(y),f(z))=f\!\left(\frac{x+y+z}{3}\right).
\]
\end{thm}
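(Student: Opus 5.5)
We argue by induction on the common level $r$ of the arguments, i.e. we prove the statement
\[
(P_r):\qquad F(f(x),f(y),f(z))=f\!\left(\tfrac{x+y+z}{3}\right)\quad\text{for all } x,y,z\in D_{3,r}\text{ with }\tfrac{x+y+z}{3}\in D_3 .
\]
Since $x+y+z\in 3^{-r}\mathbb Z$, the mean always lies in $D_{3,r+1}$, so the right-hand side is defined. We first record a \emph{linearity in the last level}. For consecutive points $u=s/3^r$ and $v=(s+1)/3^r$, the definition \eqref{eq:ternary-def} reads
\[
f\!\left(\tfrac{(3-\delta)u+\delta v}{3}\right)=F(u^{(3-\delta)},v^{(\delta)}),
\]
where we abbreviate $f(u)$ by $u$ inside $F$. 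Thus $(P_r)$ already holds for all triples taking only two adjacent values of $D_{3,r}$, and by symmetry of $F$ the order of the arguments is irrelevant.

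\textbf{Base case.} For $r=0$ the points are $0,1$, so every triple consists of two adjacent values and the base case is the definition itself.

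\textbf{Inductive step: reduction.} Assume $(P_{r})$ and take $x,y,z\in D_{3,r+1}$. Write each point as $x=\frac{(3-\alpha)p+\alpha p'}{3}$ with $p,p'=p+3^{-r}$ consecutive in $D_{3,r}$ and $\alpha\in\{0,1,2\}$, and similarly $y$ via $(q,q',\beta)$ and $z$ via $(w,w',\gamma)$. Then
\[
f(x)=F(p,p,p),\ F(p,p,p'),\ \text{or}\ F(p,p',p'),
\]
according to the value of $\alpha$, and likewise for $y$ and $z$. We place these three triples as the rows of a $3\times3$ array and apply bisymmetry. The left-hand side $F(f(x),f(y),f(z))$ then equals $F(C_1,C_2,C_3)$, where $C_j$ is $F$ applied to the $j$-th column. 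Each column is a triple from $D_{3,r}$, so by $(P_r)$ we get $C_j=f(c_j)$ with $c_j\in D_{3,r+1}$ the average of that column. The column averages satisfy $c_1+c_2+c_3=x+y+z$.

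\textbf{Inductive step: the obstacle.} We have therefore reduced the problem to $F(f(c_1),f(c_2),f(c_3))$ with $c_j\in D_{3,r+1}$, and this is circular unless the $c_j$ are simpler. This is the main difficulty. The freedom we exploit is to permute entries within each row before taking columns, which is allowed by symmetry of $F$. We choose the arrangement so that the column averages become \emph{closer together}. Ideally, two or all three columns are then built from the pair $\{p,p'\}$-type values so that each $c_j$ lies in $D_{3,r}$; then the final application of $(P_r)$ is legitimate.

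A cleaner route is a double induction. Measure a triple $x,y,z\in D_{3,r+1}$ by the number $N$ of its entries not lying in $D_{3,r}$, equivalently the number of nonzero digits among $\alpha,\beta,\gamma$. We show that a suitable row arrangement yields column averages whose total fractional complexity $\sum_j(\text{last digit of }c_j)$ strictly decreases, or lands in $D_{3,r}$. Concretely, sort the lower entries $p$-type into the first columns and the upper entries $p'$-type into the last columns, like a Young diagram. Then column $j$ contains the $j$-th smallest entries, and its average has last-level digit determined by how many rows have an upper entry in column $j$. Working out the digits (carries modulo $3$) gives the descent.

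Before committing to a general descent measure, we would first verify this on small cases such as $x=\tfrac19,\ y=\tfrac19,\ z=0$. We expect this combinatorial descent to be the hard part.
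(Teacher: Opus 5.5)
There is a genuine gap: the inductive step is never closed. Your reduction is the right one: expand $f(x),f(y),f(z)$ by \eqref{eq:ternary-def}, permute inside the rows by symmetry, transpose by bisymmetry, and evaluate the columns by the induction hypothesis. But you stop at ``this is circular unless the $c_j$ are simpler'' and defer to an unspecified descent.

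The missing idea is a specific arrangement, combined with the ``linearity in the last level'' remark you already made. Write $x=(3k_1+\alpha)/3^{r+1}$, $y=(3k_2+\beta)/3^{r+1}$, $z=(3k_3+\gamma)/3^{r+1}$ and $\alpha+\beta+\gamma=3u+\delta$ with $\delta\in\{0,1,2\}$. Place the upper entries \emph{cyclically}:
\begin{itemize}
\item in row $1$ at positions $1,\dots,\alpha$;
\item in row $2$ at the next $\beta$ positions modulo $3$;
\item in row $3$ at the next $\gamma$ positions modulo $3$.
\end{itemize}
No position repeats within a row, since each digit is at most $2$. Then every column contains $u$ or $u+1$ upper entries, with exactly $\delta$ columns of the latter kind. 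Put $K=k_1+k_2+k_3+u$. The column averages are $K/3^{r+1}$ with multiplicity $3-\delta$ and $(K+1)/3^{r+1}$ with multiplicity $\delta$. These are two \emph{adjacent} points of $D_{3,r+1}$. Hence $F(f(c_1),f(c_2),f(c_3))$ is literally the right-hand side of \eqref{eq:ternary-def} at level $r+2$, and it equals $f\bigl((3K+\delta)/3^{r+2}\bigr)=f\bigl((x+y+z)/3\bigr)$. Nothing has to become ``simpler''. The circularity disappears because the column triple is always of the two-adjacent-values type, which the definition handles directly. Your planned test case $x=y=\tfrac19$, $z=0$ is itself already such a triple in $D_{3,2}$.

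The route you do sketch would not work. The target ``each $c_j\in D_{3,r}$'' is unattainable in general, because $c_1+c_2+c_3=x+y+z$ need not lie in $3^{-r}\mathbb{Z}$ (take $x=\tfrac19$, $y=z=0$). The Young-diagram arrangement, which pushes upper entries to the last columns, concentrates the upper entries rather than balancing them. Its column averages are typically not adjacent and can even reproduce the original triple. For $x=y=\tfrac19$, $z=\tfrac49$ the sorted rows are $(0,0,\tfrac13)$, $(0,0,\tfrac13)$, $(\tfrac13,\tfrac13,\tfrac23)$. The column averages are then $\tfrac19,\tfrac19,\tfrac49$, exactly $(x,y,z)$ again. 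So no complexity measure can decrease under that arrangement. The balanced arrangement, by contrast, gives all three column averages equal to $\tfrac29$ here and finishes by reflexivity.
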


\begin{proof}
We prove the identity on $D_{3,r}$ by induction on $r$. The case $r=0$ follows directly from the definition.
Assume the identity on $D_{3,r}$ and write
\[
x=\frac{3k+\alpha}{3^{r+1}},\qquad
 y=\frac{3p+\beta}{3^{r+1}},\qquad
 z=\frac{3q+\gamma}{3^{r+1}},
\]
where $\alpha,\beta,\gamma\in\{0,1,2\}$. Expanding $f(x),f(y),f(z)$ by \eqref{eq:ternary-def}, we obtain
an outer $F$ applied to three inner $F$-values, hence to the row-values of a $3\times3$ matrix. By symmetry,
we may arrange the $+1$ entries in the three rows so that if
\[
\alpha+\beta+\gamma=3u+\delta,
\qquad \delta\in\{0,1,2\},
\]
then each column contains either $u$ or $u+1$ such entries, with exactly $\delta$ columns of the latter type.
Bisymmetry changes the row evaluation into column evaluation. By the induction hypothesis, the column
values are
\[
f\!\left(\frac{k+p+q+u}{3^{r+1}}\right)
\quad\text{or}\quad
f\!\left(\frac{k+p+q+u+1}{3^{r+1}}\right),
\]
with the second value occurring exactly $\delta$ times. A final use of the recursive definition gives
\[
F(f(x),f(y),f(z))=
f\!\left(\frac{3(k+p+q+u)+\delta}{3^{r+2}}\right)
=f\!\left(\frac{x+y+z}{3}\right).
\]
\end{proof}

The passage from this identity on $D_3$ to continuity is a special case of the dense-domain lemma proved in
Section~\ref{sec:dense-lemma}. Thus the ternary case is fully covered by the general mechanism below.

\section{Alternative reductions in product arities}
\label{sec:product}

The proof of the main theorem will not rely on this section. Nevertheless,
product arities admit a useful alternative reduction. This explains, for
instance, why the case $n=4$ follows from the binary theorem.

\subsection{The case \texorpdfstring{$n=4$}{n=4}}

\begin{thm}\label{thm:n4}
Let $I=[a,b]$ be a non-degenerate compact interval and let
$F:I^4\to I$ be reflexive, symmetric, bisymmetric and partially strictly
increasing. Then $F$ is continuous and is a $4$-variable quasi-arithmetic
mean.
\end{thm}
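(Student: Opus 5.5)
Define the binary operation $G:I^2\to I$ by
\[
G(x,y)=F(x,x,y,y)\qquad (x,y\in I),
\]
and show that $G$ satisfies the hypotheses of Theorem~\ref{thm:BKSz}. Reflexivity of $G$ is immediate from that of $F$. Symmetry follows from symmetry of $F$, since $F(x,x,y,y)=F(y,y,x,x)$. Partial strict increase also transfers: if $x<x'$, then $F(x,x,y,y)<F(x',x,y,y)<F(x',x',y,y)$.

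The substantive check is bisymmetry of $G$. Compute
\[
G(G(x,y),G(u,v))=F\bigl(F(x,x,y,y),F(x,x,y,y),F(u,u,v,v),F(u,u,v,v)\bigr),
\]
which is $F$ applied to the row values of the $4\times4$ array whose rows are $(x,x,y,y)$, $(x,x,y,y)$, $(u,u,v,v)$, $(u,u,v,v)$. By bisymmetry of $F$ this equals $F$ of the column values. The columns are $(x,x,u,u)$ twice and $(y,y,v,v)$ twice, so the result is $G(G(x,u),G(y,v))$. Theorem~\ref{thm:BKSz} then gives that $G$ is continuous and equal to a binary quasi-arithmetic mean generated by some $\varphi$.

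Next, express $F$ through $G$. Apply bisymmetry of $F$ to the array with rows $(x_1,x_1,x_2,x_2)$, $(x_1,x_1,x_2,x_2)$, $(x_3,x_3,x_4,x_4)$, $(x_3,x_3,x_4,x_4)$. Its row values are $G(x_1,x_2)$ and $G(x_3,x_4)$, each appearing twice, so the left side is $G(G(x_1,x_2),G(x_3,x_4))$. Its column values are $F(x_1,x_1,x_3,x_3)=G(x_1,x_3)$ and $G(x_2,x_4)$, each appearing twice, which does not directly recover $F(x_1,x_2,x_3,x_4)$. Use instead the array with rows $(x_1,x_2,x_3,x_4)$, $(x_1,x_2,x_3,x_4)$, $(x_3,x_4,x_1,x_2)$, $(x_3,x_4,x_1,x_2)$. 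By symmetry each row value equals $m:=F(x_1,\dots,x_4)$, so the row side is $F(m,m,m,m)=m$ by reflexivity. The columns are $(x_1,x_1,x_3,x_3)$, $(x_2,x_2,x_4,x_4)$, $(x_3,x_3,x_1,x_1)$, $(x_4,x_4,x_2,x_2)$, with $F$-values $G(x_1,x_3)$, $G(x_2,x_4)$, $G(x_1,x_3)$, $G(x_2,x_4)$. Hence
\[
m=F(g,h,g,h)=G(g,h),\qquad g=G(x_1,x_3),\ h=G(x_2,x_4).
\]
Therefore $F(x_1,\dots,x_4)=G(G(x_1,x_3),G(x_2,x_4))$.

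Substituting the representation of $G$ gives
\[
F(x_1,\dots,x_4)=\varphi^{-1}\!\left(\frac{\varphi(x_1)+\cdots+\varphi(x_4)}{4}\right),
\]
and continuity of $F$ follows as a composition of continuous maps. The main obstacle is finding the right array in the previous paragraph, that is, an array whose rows all evaluate to $F(x_1,\dots,x_4)$ via symmetry while its columns collapse to values of $G$. The doubled and swapped rows accomplish exactly this.
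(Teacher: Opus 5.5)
Your proof is correct and follows essentially the paper's route. You use the same auxiliary operation $G(x,y)=F(x,x,y,y)$, appeal to Theorem~\ref{thm:BKSz}, and derive the composition identity $F(x_1,x_2,x_3,x_4)=G(G(x_1,x_3),G(x_2,x_4))$ from what is, up to transposition and relabeling, the same $4\times4$ array. The one difference is that you verify bisymmetry of $G$ by a separate direct array computation, whereas the paper deduces it from the composition identity together with symmetry of $F$.
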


\begin{proof}
Define
\[
H(x,y):=F(x,x,y,y)\qquad (x,y\in I).
\]
Then $H$ is reflexive and symmetric. It is also partially strictly increasing:
if $x<x'$, then
\[
F(x,x,y,y)<F(x',x,y,y)<F(x',x',y,y),
\]
and hence $H(x,y)<H(x',y)$. The second variable is treated similarly.

We first prove the key identity
\begin{equation}\label{eq:n4-key}
H(H(x,y),H(u,v))=F(x,y,u,v)
\qquad (x,y,u,v\in I).
\end{equation}
Indeed, by definition,
\[
H(H(x,y),H(u,v))
=
F(H(x,y),H(x,y),H(u,v),H(u,v)).
\]
Using symmetry of $F$ in the inner values $H(x,y)$ and $H(u,v)$, this can
be written as
\[
F\Bigl(
F(x,x,y,y),
F(y,y,x,x),
F(u,u,v,v),
F(v,v,u,u)
\Bigr).
\]
Applying the bisymmetry of $F$ to this expression gives
\[
F\Bigl(
F(x,y,u,v),
F(x,y,u,v),
F(y,x,v,u),
F(y,x,v,u)
\Bigr).
\]
By symmetry of $F$, all four inner values are equal to $F(x,y,u,v)$. Hence
reflexivity gives \eqref{eq:n4-key}.

Now \eqref{eq:n4-key} immediately implies that $H$ is bisymmetric. Indeed,
\[
H(H(x,y),H(u,v))=F(x,y,u,v),
\]
while
\[
H(H(x,u),H(y,v))=F(x,u,y,v).
\]
The two right-hand sides are equal by symmetry of $F$. Therefore
\[
H(H(x,y),H(u,v))=H(H(x,u),H(y,v)).
\]

Thus $H$ is reflexive, symmetric, bisymmetric and partially strictly
increasing. By Theorem~\ref{thm:BKSz}, $H$ is continuous and is a binary
quasi-arithmetic mean. Hence there exist a non-degenerate real interval $J$
and a continuous strictly monotone bijection $\varphi:I\to J$ such that
\[
H(x,y)=
\varphi^{-1}\!\left(\frac{\varphi(x)+\varphi(y)}{2}\right)
\qquad (x,y\in I).
\]
Using \eqref{eq:n4-key} once more, we obtain
\begin{align*}
F(x,y,u,v)
&=H(H(x,y),H(u,v))\\
&=
\varphi^{-1}\!\left(
\frac{\varphi(H(x,y))+\varphi(H(u,v))}{2}
\right)\\
&=
\varphi^{-1}\!\left(
\frac{
\frac{\varphi(x)+\varphi(y)}{2}
+
\frac{\varphi(u)+\varphi(v)}{2}
}{2}
\right)\\
&=
\varphi^{-1}\!\left(
\frac{\varphi(x)+\varphi(y)+\varphi(u)+\varphi(v)}{4}
\right).
\end{align*}
In particular, $F$ is continuous and is a $4$-variable quasi-arithmetic mean.
\end{proof}

\subsection{A product-arity reduction}

The preceding argument gives the product-arity reduction in its most transparent
form for $n=4$. The same idea works for arbitrary product arities. Since this
section is only complementary, and since Sections~\ref{sec:Dn-construction}--%
\ref{sec:proof-main} will prove the main theorem directly for all arities, we
give a compact proof of the general reduction, keeping only the indexing details
that are needed to make the construction unambiguous.

\begin{prop}[Product-arity reduction]\label{prop:product-arity}
Let $k,\ell\ge2$ and set $n=k\ell$. Assume that the automatic-continuity
conclusion of Theorem~\ref{thm:main} is already known for arities $k$ and
$\ell$. Then it is also true for arity $n$.
\end{prop}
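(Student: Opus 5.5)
Following the pattern of Theorem~\ref{thm:n4}, I would index the $n=k\ell$ variables by pairs $(i,j)$ with $1\le i\le k$, $1\le j\le \ell$, and define two derived operations by repetition. Set
\[
G(x_1,\dots,x_k):=F\bigl(\underbrace{x_1,\dots,x_1}_{\ell},\dots,\underbrace{x_k,\dots,x_k}_{\ell}\bigr),
\qquad
K(y_1,\dots,y_\ell):=F\bigl(\underbrace{y_1,\dots,y_\ell}_{},\dots,\underbrace{y_1,\dots,y_\ell}_{}\bigr),
\]
where in $K$ the block $(y_1,\dots,y_\ell)$ is repeated $k$ times. Reflexivity and symmetry of $G$ and $K$ are immediate from those of $F$. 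Partial strict increase follows as in the $n=4$ case: raising one argument of $G$ raises $\ell$ arguments of $F$, one at a time, each strictly.

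The key identity I would aim for is
\[
G\bigl(K(y^{(1)}),\dots,K(y^{(k)})\bigr)=F\bigl((y^{(i)}_j)_{i,j}\bigr),
\]
together with the analogous identity with the roles of $G$ and $K$ interchanged. To prove it, I would write the left side as $F$ applied to an $n$-tuple consisting of $\ell$ copies of each $K(y^{(i)})$, that is, $F$ of $n$ inner $F$-values. Each inner value is $F$ evaluated on $k$ copies of the row $y^{(i)}$. I would arrange this as an $n\times n$ array whose rows are suitably permuted copies; symmetry lets me choose the permutations freely. The permutations should be chosen so that after bisymmetry every column contains each entry $y^{(i)}_j$ exactly once. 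Concretely, I would index the $n$ outer positions by $(i,m)$ and the inner positions by $(a,b)$. In row $(i,m)$ I would place, at inner position $(a,b)$, the entry $y^{(i+a)}_{b+m}$ with indices taken cyclically, which is a Latin-square-type arrangement. Then each column is a permutation of the full tuple $(y^{(i)}_j)$, so by symmetry every column value equals $F((y^{(i)}_j))$, and reflexivity finishes the identity. The main obstacle is checking this bookkeeping. I need each row, as a multiset, to be $k$ copies of $y^{(i)}$ (true, since $a$ ranges over $k$ values and $b+m$ over all $j$). I also need each column $(a,b)$ to hit every pair $(i+a,\,b+m)$ exactly once as $(i,m)$ varies, which holds because $(i,m)\mapsto(i+a,b+m)$ is a bijection.

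From the key identity, bisymmetry of $G$ does not follow directly, since $G$ and $K$ differ. So I would apply the same construction with $k=\ell$-type square blocks, or better, derive self-composition identities. The cleaner route is to show that $G$ is bisymmetric directly. Write $G(G(x^{(1)}),\dots,G(x^{(k)}))$ as $F$ of $n$ inner $F$-values, apply bisymmetry of $F$, and use symmetry to recognize the column values as $G$ applied to the transposed data. The same repetition and Latin-square arrangement as above makes this work, and similarly for $K$.

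By hypothesis, $G$ is then a $k$-ary quasi-arithmetic mean with generator $\varphi$, and $K$ is an $\ell$-ary one with generator $\psi$. The key identity shows that $F$ is continuous, being a composition of continuous maps. Theorem~\ref{thm:Maksa} then gives the $n$-ary quasi-arithmetic representation. Since Lemma~\ref{lem:compact-reduction} reduces everything to compact intervals, it suffices to work on $[a,b]$ throughout.
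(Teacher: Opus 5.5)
Your strategy is the paper's: the same two repetition operations (your $G$ and $K$ are the paper's $H$ and $G$), the block-composition identity $G(K(y^{(1)}),\dots,K(y^{(k)}))=F((y^{(i)}_j)_{i,j})$ proved by a cyclic array plus bisymmetry and reflexivity, continuity of $F$ as a composition, and Theorem~\ref{thm:Maksa} at the end. However, the array you write down does not do what you claim, and this is the step you yourself called the main obstacle. With entry $y^{(i+a)}_{b+m}$ at row $(i,m)$ and column $(a,b)$, the map $(a,b)\mapsto(i+a,b+m)$ is a bijection for fixed $(i,m)$, just as $(i,m)\mapsto(i+a,b+m)$ is for fixed $(a,b)$. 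So every \emph{row} also contains each $y^{(i')}_j$ exactly once; it is not $k$ copies of $y^{(i)}$. Your justification ``since $a$ ranges over $k$ values'' is exactly why the superscript sweeps all $k$ data rows. With this array both the row and the column evaluation equal $F((y^{(i)}_j)_{i,j})$, so bisymmetry yields only a tautology, and $G(K(y^{(1)}),\dots,K(y^{(k)}))$ never appears.

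The repair is to make the superscript depend only on the row index: put $y^{(i)}_{b+m}$ at position $((i,m),(a,b))$. Then row $(i,m)$ consists of $k$ copies of the shift $(y^{(i)}_{1+m},\dots,y^{(i)}_{\ell+m})$, so its value is $K(y^{(i)})$. That value is repeated $\ell$ times as $m$ varies, so the row evaluation is $G(K(y^{(1)}),\dots,K(y^{(k)}))$. In column $(a,b)$, for each fixed $i$ the index $b+m$ runs through all $j$, so each $y^{(i)}_j$ occurs exactly once. This is precisely the paper's array $a_{i,1+((r+t-2)\bmod\ell)}$.

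Second, you only gesture at the bisymmetry of $G$ and $K$, and ``the same Latin-square arrangement as above'' is not what is needed. The paper uses the constant-block array $z_{(i,s),(j,t)}=x_{ij}$ with $1\le i,j\le k$ and $1\le s,t\le\ell$. Row $(i,s)$ evaluates to $G(x_{i1},\dots,x_{ik})$ and column $(j,t)$ to $G(x_{1j},\dots,x_{kj})$, each repeated $\ell$ times, so bisymmetry of $F$ is literally bisymmetry of $G$. Swapping $k$ and $\ell$ handles $K$. With these two corrections the rest of your argument goes through. The interchanged identity and the generators $\varphi,\psi$ are not needed; continuity of $G$ and $K$ suffices.
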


\begin{proof}
Let $I=[a,b]$, and let $F:I^n\to I$ be reflexive, symmetric, bisymmetric and
partially strictly increasing. Define
\begin{align}
H(x_1,\dots,x_k)
&:=F(\underbrace{x_1,\dots,x_1}_{\ell},\dots,
\underbrace{x_k,\dots,x_k}_{\ell}),\label{eq:def-H-product}\\
G(y_1,\dots,y_\ell)
&:=F(\underbrace{y_1,\dots,y_1}_{k},\dots,
\underbrace{y_\ell,\dots,y_\ell}_{k}).\label{eq:def-G-product}
\end{align}
Then $H$ and $G$ are reflexive, symmetric and partially strictly increasing.

We show that $H$ is bisymmetric; the proof for $G$ is the same with $k$
and $\ell$ interchanged. Let $x_{ij}\in I$, $1\le i,j\le k$. Consider the
$n\times n$ array whose rows are indexed by $(i,s)$, $1\le i\le k$,
$1\le s\le\ell$, and whose columns are indexed by $(j,t)$,
$1\le j\le k$, $1\le t\le\ell$, with entry
\[
z_{(i,s),(j,t)}=x_{ij}.
\]
For fixed $(i,s)$, the row value is $H(x_{i1},\dots,x_{ik})$, and this value
appears $\ell$ times as $s$ varies. Hence the row evaluation of the array is
\[
H\bigl(H(x_{11},\dots,x_{1k}),\dots,H(x_{k1},\dots,x_{kk})\bigr).
\]
For fixed $(j,t)$, the column value is $H(x_{1j},\dots,x_{kj})$, again
appearing $\ell$ times as $t$ varies. Hence the column evaluation is
\[
H\bigl(H(x_{11},\dots,x_{k1}),\dots,H(x_{1k},\dots,x_{kk})\bigr).
\]
Bisymmetry of $F$ makes these two evaluations equal, so $H$ is bisymmetric.

Thus $H$ and $G$ satisfy the hypotheses in arities $k$ and $\ell$,
respectively. By the assumed cases, both $H$ and $G$ are continuous.

We now prove the block-composition identity
\begin{equation}\label{eq:product-composition}
F((a_{ij})_{1\le i\le k,\ 1\le j\le\ell})
=
H\bigl(G(a_{11},\dots,a_{1\ell}),\dots,G(a_{k1},\dots,a_{k\ell})\bigr),
\end{equation}
where the left-hand side means $F$ applied to the $k\ell$ entries $a_{ij}$,
in any fixed order.

Set
\[
R_i:=G(a_{i1},\dots,a_{i\ell})\qquad(i=1,\dots,k).
\]
Then the right-hand side of \eqref{eq:product-composition} is
\[
H(R_1,\dots,R_k)
=
F(\underbrace{R_1,\dots,R_1}_{\ell},\dots,
\underbrace{R_k,\dots,R_k}_{\ell}).
\]
We realize this expression as the row evaluation of an $n\times n$ array.
Index the rows by pairs $(i,t)$, where $1\le i\le k$ and $1\le t\le\ell$,
and the columns by pairs $(r,s)$, where $1\le r\le\ell$ and $1\le s\le k$.
In row $(i,t)$ and column $(r,s)$, place
\[
a_{i,\,1+((r+t-2)\bmod \ell)}.
\]
For fixed $i,t$, the row contains $k$ copies of each of
$a_{i1},\dots,a_{i\ell}$, hence its row value is $R_i$. Therefore the row
evaluation is $H(R_1,\dots,R_k)$.

For fixed $r,s$, as $t$ runs through $1,\dots,\ell$, the index
\[
1+((r+t-2)\bmod \ell)
\]
runs through $1,\dots,\ell$. Hence the column $(r,s)$ contains all entries
$a_{ij}$, $1\le i\le k$, $1\le j\le\ell$, each exactly once. Thus every
column value is, by symmetry of $F$,
\[
F((a_{ij})_{i,j}).
\]
Applying bisymmetry of $F$ to this array, the row evaluation is equal to $F$
applied to $n$ identical copies of this common column value. Reflexivity gives
\eqref{eq:product-composition}.

Since $H$ and $G$ are continuous, \eqref{eq:product-composition} shows that
$F$ is continuous. The quasi-arithmetic representation of $F$ then follows
from the continuous $n$-variable theorem, Theorem~\ref{thm:Maksa}.
\end{proof}

\begin{cor}\label{cor:powers}
If Theorem~\ref{thm:main} is known for an arity $m\ge2$, then it is known for
all arities $m^r$, $r\ge1$. In particular, together with the binary theorem
and the ternary argument above, Proposition~\ref{prop:product-arity} explains
the product arities $2^k3^\ell$, where $k,\ell\ge0$ and $k+\ell\ge1$.
\end{cor}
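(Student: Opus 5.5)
The corollary is an induction on $r$ using Proposition~\ref{prop:product-arity}. For $r=1$ there is nothing to prove. Assume Theorem~\ref{thm:main} holds for arities $m$ and $m^{r}$. Apply Proposition~\ref{prop:product-arity} with $k=m$ and $\ell=m^{r}$; both are at least $2$, and the automatic-continuity conclusion is known for each by hypothesis. The proposition then gives the conclusion for arity $m^{r+1}=k\ell$, which closes the induction. Strictly speaking, the proposition only delivers the compact-interval statement. However, its proof passes through continuity and then Theorem~\ref{thm:Maksa}, and Lemma~\ref{lem:compact-reduction} lifts compact-interval continuity to arbitrary intervals. So ``known for arity $m$'' can be used interchangeably in either form.

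For the second assertion I will use induction on $k+\ell$ over the arities $2^k3^\ell$. The base cases are $2$ and $3$. Arity $2$ is Theorem~\ref{thm:BKSz}. Arity $3$ comes from the ternary argument: Proposition~\ref{prop:ternary-monotone} and Theorem~\ref{thm:ternary-identity}, combined with the dense-domain lemma, as the paper itself says. For $k+\ell\ge2$, write $2^k3^\ell=p\cdot q$ with $p\in\{2,3\}$ a prime factor and $q=2^{k'}3^{\ell'}$, where $k'+\ell'=k+\ell-1\ge1$. Then $q\ge2$ is covered by the induction hypothesis, and Proposition~\ref{prop:product-arity} with $(k,\ell)=(p,q)$ finishes the step. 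Note that the first part of the corollary alone would only give the pure powers $2^k$ and $3^\ell$. The mixed products genuinely need the proposition with two different factors, so I will run the induction through the proposition directly rather than through the power statement.

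The only delicate point is that arity $3$ is not yet fully established at this stage of the paper. Its continuity relies on the dense-domain lemma of Section~\ref{sec:dense-lemma}, which comes later. I will therefore phrase this half of the corollary as conditional on the ternary case, or as an explanation pending that lemma, matching the word ``explains'' in the statement. Apart from this, every step is a direct application of the proposition.
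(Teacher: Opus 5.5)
Your proposal is correct and is essentially the paper's argument: the paper's proof consists only of the instruction to apply Proposition~\ref{prop:product-arity} inductively, and your two inductions spell this out, using the factors $(m,m^r)$ for the powers and $(p,q)$ with $p\in\{2,3\}$ for the arities $2^k3^\ell$. Your two side remarks are also accurate: the mixed products need the proposition with two different factors, and the ternary base case depends on the dense-domain lemma of Section~\ref{sec:dense-lemma}, which is proved later (hence the word ``explains'').
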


\begin{proof}
Apply Proposition~\ref{prop:product-arity} inductively.
\end{proof}

\section{Two dense sets: the control set \texorpdfstring{$E_n$}{En} and the full set \texorpdfstring{$D_n$}{Dn}}
\label{sec:En-Dn}

The proof below involves two different dense subsets of $[0,1]$. The first one
is the full $n$-adic set $D_n$. It is the natural domain for the algebraic
part of the proof, because it is closed under the arithmetic average of $n$
independent $n$-adic inputs. This is the set on which we shall prove the full
identity
\[
F(f(t_1),\dots,f(t_n))
=
f\!\left(\frac{t_1+\cdots+t_n}{n}\right).
\]

The second set, denoted by $E_n$, is smaller and is generated only by the
two-point operation
\[
\phi(u,v)=\frac{(n-1)u+v}{n}.
\]
It is the natural control set for the dense-domain gap argument: the decisive
comparisons in the continuity proof use precisely averages of this form. Thus
$D_n$ belongs to the algebraic propagation of the full averaging identity,
whereas $E_n$ reflects the smaller amount of density needed for the continuity
mechanism.

For $n=2$ and $n=3$ these two sets coincide. Starting from $n=4$, however,
they are genuinely different: $E_n$ is still dense in $[0,1]$, but it is a
proper subset of $D_n$.

\begin{defn}\label{def:Dn}
Let
\[
D_n:=\left\{\frac{r}{n^m}:m\ge0,\ 0\le r\le n^m\right\},
\qquad
D_{n,m}:=\left\{\frac{k}{n^m}:0\le k\le n^m\right\}.
\]
Thus $D_n=\bigcup_{m\ge0}D_{n,m}$.
\end{defn}

\begin{lem}\label{lem:Dn-dense}
The set $D_n$ is dense in $[0,1]$ and is closed under $n$-fold arithmetic
averaging.
\end{lem}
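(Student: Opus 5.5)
The plan is to verify the two claims separately, each directly from Definition~\ref{def:Dn}, using only elementary arithmetic of $n$-adic fractions.

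\emph{Density.} Given $x\in[0,1]$ and $\varepsilon>0$, choose $m$ with $n^{-m}<\varepsilon$; this is possible since $n\ge2$ gives $n^{-m}\to0$. The points $k/n^m$, $0\le k\le n^m$, form a grid of $[0,1]$ with mesh $n^{-m}$. Taking $k=\lfloor n^m x\rfloor$ (or $k=n^m$ when $x=1$) gives $|x-k/n^m|\le n^{-m}<\varepsilon$, and $k/n^m\in D_{n,m}\subset D_n$. Hence $D_n$ is dense in $[0,1]$.

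\emph{Closure under averaging.} The first observation is that the levels are nested: $D_{n,m}\subset D_{n,m+1}$, because $k/n^m=nk/n^{m+1}$ with $0\le nk\le n^{m+1}$. Given $t_1,\dots,t_n\in D_n$ with $t_i\in D_{n,m_i}$, set $m=\max_i m_i$. By nestedness, every $t_i\in D_{n,m}$, so we may write $t_i=k_i/n^m$ with $0\le k_i\le n^m$. Then
\[
\frac{t_1+\cdots+t_n}{n}=\frac{k_1+\cdots+k_n}{n^{m+1}},
\]
and the numerator $K=\sum_i k_i$ satisfies $0\le K\le n\cdot n^m=n^{m+1}$. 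Hence the average lies in $D_{n,m+1}\subset D_n$. The same computation also records the sharper statement that averaging maps $D_{n,m}^n$ into $D_{n,m+1}$, which is the level-wise form of the closure property.

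No step is a genuine obstacle. The only point requiring care is bringing all inputs to a common level before averaging, which is exactly what the nestedness observation provides.
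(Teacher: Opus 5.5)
Your proof is correct and follows the same route as the paper: density comes from the mesh size $n^{-m}$ of $D_{n,m}$, and closure comes from rewriting all inputs at a common level $m$, so that the average is $(k_1+\cdots+k_n)/n^{m+1}$. You also make explicit two points the paper leaves implicit: the nesting $D_{n,m}\subset D_{n,m+1}$ and the bound $0\le K\le n^{m+1}$.
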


\begin{proof}
Density is immediate from the mesh size $n^{-m}$ of $D_{n,m}$. If
$x_i\in D_n$, choose a common level $m$ with $x_i=r_i/n^m$. Then
\[
\frac{x_1+\cdots+x_n}{n}=\frac{r_1+\cdots+r_n}{n^{m+1}}\in D_n.
\]
\end{proof}

\begin{defn}\label{def:En}
Define
\[
E_n^{(0)}:=\{0,1\},
\qquad
E_n^{(m+1)}:=
\left\{\frac{(n-1)u+v}{n}:u,v\in E_n^{(m)}\right\},
\]
and set
\[
E_n:=\bigcup_{m\ge0}E_n^{(m)}.
\]
\end{defn}

\begin{lem}\label{lem:En-nesting}
For every $m\ge0$, one has $E_n^{(m)}\subseteq E_n^{(m+1)}$. Moreover,
$E_n\subseteq D_n$.
\end{lem}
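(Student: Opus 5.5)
Both assertions follow by induction on $m$, using only Definitions~\ref{def:En} and \ref{def:Dn}.

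\emph{Nesting.} We show $E_n^{(m)}\subseteq E_n^{(m+1)}$ by induction on $m$. For $m=0$, take $u=v=0$ and $u=v=1$ in the generating formula:
\[
\tfrac{(n-1)\cdot0+0}{n}=0,\qquad \tfrac{(n-1)\cdot1+1}{n}=1 .
\]
Hence $E_n^{(0)}=\{0,1\}\subseteq E_n^{(1)}$. The underlying fact is that $\phi(w,w)=w$ for every $w$, since $\phi(u,v)=\frac{(n-1)u+v}{n}$ is idempotent.

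For the inductive step, assume $E_n^{(m-1)}\subseteq E_n^{(m)}$ and let $w\in E_n^{(m)}$. By definition, $w=\phi(u,v)$ for some $u,v\in E_n^{(m-1)}$. The inductive hypothesis gives $u,v\in E_n^{(m)}$, so $w=\phi(u,v)\in E_n^{(m+1)}$.

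One could argue even more directly: every $w\in E_n^{(m)}$ satisfies $w=\phi(w,w)$ with $w\in E_n^{(m)}$, so $w\in E_n^{(m+1)}$. This avoids the induction entirely, and I would present this one-line version.

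\emph{Inclusion in $D_n$.} We show $E_n^{(m)}\subseteq D_{n,m}$ by induction on $m$. The base case is $E_n^{(0)}=\{0,1\}=D_{n,0}$.

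For the inductive step, let $u=k/n^m$ and $v=l/n^m$ with $0\le k,l\le n^m$. Then
\[
\phi(u,v)=\frac{(n-1)k+l}{n^{m+1}},
\qquad
0\le (n-1)k+l\le (n-1)n^m+n^m=n^{m+1}.
\]
So $\phi(u,v)\in D_{n,m+1}$. Taking the union over $m$ gives $E_n\subseteq D_n$.

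This is also a special case of Lemma~\ref{lem:Dn-dense}: $\phi(u,v)$ is the $n$-fold average of $u$ repeated $n-1$ times together with $v$, and $D_n$ is closed under such averages. The only point needing care is the range check $0\le (n-1)k+l\le n^{m+1}$, which is immediate.
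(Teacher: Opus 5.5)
Your proof is correct and follows the paper's argument: the nesting comes from the idempotence $\frac{(n-1)x+x}{n}=x$, and $E_n\subseteq D_n$ comes from the induction $E_n^{(m)}\subseteq D_{n,m}$, using the numerator $(n-1)k+l$ at level $m+1$. Your explicit range check $0\le (n-1)k+l\le n^{m+1}$ is a detail the paper leaves implicit.
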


\begin{proof}
The nesting follows from
\[
x=\frac{(n-1)x+x}{n}.
\]
For the inclusion, use induction on $m$. The statement is clear for $m=0$.
If $u=a/n^m$ and $v=b/n^m$, then
\[
\frac{(n-1)u+v}{n}=\frac{(n-1)a+b}{n^{m+1}},
\]
so $E_n^{(m+1)}\subseteq D_{n,m+1}$ whenever $E_n^{(m)}\subseteq D_{n,m}$.
\end{proof}

\begin{lem}\label{lem:E2E3}
For $n=2$ and $n=3$, one has $E_n=D_n$.
\end{lem}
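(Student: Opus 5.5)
By Lemma~\ref{lem:En-nesting} we already have $E_n\subseteq D_n$. So the plan is to prove the reverse inclusion $D_{n,m}\subseteq E_n^{(m)}$ by induction on $m$, for $n=2$ and $n=3$. The base case $D_{n,0}=\{0,1\}=E_n^{(0)}$ is immediate.

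For the inductive step, take a point $k/n^{m+1}\in D_{n,m+1}$. The idea is to write it as $\phi(u,v)=\frac{(n-1)u+v}{n}$ with $u,v\in D_{n,m}$. This amounts to finding integers $a,b\in\{0,\dots,n^m\}$ with
\[
(n-1)a+b=k .
\]

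For $n=2$ this is easy. Write $k=2s+\delta$ with $\delta\in\{0,1\}$. Then take $a=s$ and $b=s+\delta$; note that $s+\delta\le 2^m$ because $k\le 2^{m+1}$. This gives $a+b=k$.

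For $n=3$ we need $2a+b=k$. Write $k=3s+\delta$ with $\delta\in\{0,1,2\}$.
\begin{itemize}
\item If $\delta=0$, take $a=b=s$.
\item If $\delta=1$, take $a=s$ and $b=s+1$; then $2s+(s+1)=3s+1$.
\item If $\delta=2$, take $a=s+1$ and $b=s$; then $2(s+1)+s=3s+2$.
\end{itemize}
In every case $a,b\in\{s,s+1\}$. When $\delta\ge1$ we have $3s+\delta\le 3^{m+1}$, which forces $s\le 3^m-1$, so $s+1\le 3^m$ and both points lie in $D_{3,m}$. By the induction hypothesis these points lie in $E_3^{(m)}$, so $k/3^{m+1}\in E_3^{(m+1)}$.

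Combining the two cases gives $D_n=\bigcup_m D_{n,m}\subseteq E_n$ for $n=2,3$, and hence equality. The only point requiring care is the range check on $a$ and $b$. This check is exactly where larger $n$ breaks down: for $n\ge4$, values of $k$ with residue $\delta\in\{2,\dots,n-2\}$ cannot be reached with $a,b\in\{s,s+1\}$.
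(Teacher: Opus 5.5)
Your proof is correct and matches the paper's argument: both prove $D_{n,m}\subseteq E_n^{(m)}$ by induction on $m$, using $\frac{2q+\delta}{2^{m+1}}=\frac{q/2^m+(q+\delta)/2^m}{2}$ for $n=2$ and the choices $(a,b)\in\{(s,s),(s,s+1),(s+1,s)\}$ for $n=3$. Your explicit check that $s+1\le n^m$ when $\delta\ge1$ supplies a detail the paper leaves implicit. Your closing aside about $n\ge4$ is loose, though it does not affect the proof: the obstruction there is that the inclusion $D_{n,1}\subseteq E_n^{(1)}$ already fails (since $2/n\notin E_n$), not the range check.
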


\begin{proof}
The inclusion $E_n\subseteq D_n$ follows from Lemma~\ref{lem:En-nesting}. We
prove the reverse inclusion separately for $n=2$ and $n=3$.

For $n=2$, we prove by induction that $D_{2,m}\subseteq E_2^{(m)}$ for all
$m\ge0$. The case $m=0$ is clear. Assume $D_{2,m}\subseteq E_2^{(m)}$, and
take
\[
\frac{2q+\delta}{2^{m+1}}\in D_{2,m+1},
\qquad
\delta\in\{0,1\}.
\]
If $\delta=0$, then
\[
\frac{2q}{2^{m+1}}
=
\frac{q/2^m+q/2^m}{2}.
\]
If $\delta=1$, then
\[
\frac{2q+1}{2^{m+1}}
=
\frac{q/2^m+(q+1)/2^m}{2}.
\]
In both cases the two entries on the right belong to $D_{2,m}$, hence to
$E_2^{(m)}$. Thus $D_{2,m+1}\subseteq E_2^{(m+1)}$.

For $n=3$, we prove by induction that $D_{3,m}\subseteq E_3^{(m)}$ for all
$m\ge0$. Again the case $m=0$ is clear. Assume
$D_{3,m}\subseteq E_3^{(m)}$, and take
\[
\frac{3q+\delta}{3^{m+1}}\in D_{3,m+1},
\qquad
\delta\in\{0,1,2\}.
\]
If $\delta=0$, then
\[
\frac{3q}{3^{m+1}}
=
\frac{2(q/3^m)+q/3^m}{3}.
\]
If $\delta=1$, then
\[
\frac{3q+1}{3^{m+1}}
=
\frac{2(q/3^m)+(q+1)/3^m}{3}.
\]
If $\delta=2$, then
\[
\frac{3q+2}{3^{m+1}}
=
\frac{2((q+1)/3^m)+q/3^m}{3}.
\]
In each case the two entries on the right belong to $D_{3,m}$, hence to
$E_3^{(m)}$. Thus $D_{3,m+1}\subseteq E_3^{(m+1)}$.
\end{proof}

\begin{lem}[Density of $E_n$]\label{lem:En-dense}
The set $E_n$ is dense in $[0,1]$.
\end{lem}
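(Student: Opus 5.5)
We prove that the maximal gap of $E_n^{(m)}$ tends to $0$ as $m\to\infty$. Since the sets $E_n^{(m)}$ increase with $m$ (Lemma~\ref{lem:En-nesting}), this gives density.

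Write $\lambda=(n-1)/n$ and consider the maps
\[
\phi_v(u)=\lambda u+(1-\lambda)v .
\]
These are contractions toward $v$ with ratio $\lambda$. The map $\phi_0$ has fixed point $0$ and $\phi_1$ has fixed point $1$.

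We first show, by induction on $m$, that $E_n^{(m)}$ contains $0$ and $1$, is finite, and that every gap between consecutive points of $E_n^{(m)}$ is at most $\lambda^m$. This is stronger than needed.

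The base case $m=0$ is clear: the only gap of $E_n^{(0)}=\{0,1\}$ is $1=\lambda^0$.

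For the inductive step, use the fact that $\phi(u,v)\in E_n^{(m+1)}$ whenever $u,v\in E_n^{(m)}$. Fix any $v\in E_n^{(m)}$. Then $\phi_v$ maps $E_n^{(m)}$ onto a set whose consecutive gaps are at most $\lambda^{m+1}$, and this set covers the interval $[\phi_v(0),\phi_v(1)]=[(1-\lambda)v,\ (1-\lambda)v+\lambda]$. So each such interval has mesh at most $\lambda^{m+1}$.

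Next we check that these intervals, as $v$ ranges over $E_n^{(m)}$, cover $[0,1]$ with overlaps.
\begin{itemize}
\item Taking $v=0$ gives the interval $[0,\lambda]$.
\item Taking $v=1$ gives the interval $[1/n,1]$.
\item Taking $v<v'$ consecutive in $E_n^{(m)}$, the left endpoints differ by $(1-\lambda)(v'-v)\le 1/n$. Each interval has length $\lambda=(n-1)/n\ge 1/n$, so consecutive intervals overlap.
\end{itemize}
Hence the union is $[0,1]$. Since each piece has mesh at most $\lambda^{m+1}$ and the pieces overlap, every gap of $E_n^{(m+1)}$ is at most $\lambda^{m+1}$. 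If two points lie in different pieces, the overlap guarantees that some point of one piece lies in the overlap region, so no gap exceeds the local mesh.

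The main subtlety is exactly this gluing step. We must verify that a gap of the union of overlapping finite sets is bounded by the maximal gap of the pieces. Indeed, take any $x\in(0,1)$. It lies in some piece $[\alpha,\beta]$, and that piece has points within distance $\lambda^{m+1}$ on each side of $x$ inside $[\alpha,\beta]$. Consequently the gap of the union containing $x$ has length at most $\lambda^{m+1}$.

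Since $\lambda^m\to0$, every subinterval of $[0,1]$ eventually meets $E_n^{(m)}$, and therefore $E_n$ is dense in $[0,1]$.
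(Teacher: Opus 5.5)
Your proof is correct, but it takes a different route from the paper.

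\textbf{The paper's route.} The paper argues locally. It fixes $t$ and takes the gap $[L_m,R_m]$ of $E_n^{(m)}$ containing $t$. It then inserts the two points $\phi(L_m,R_m)$ and $\phi(R_m,L_m)$ of $E_n^{(m+1)}$ into that single gap, which splits it into pieces of length at most $\frac{n-1}{n}(R_m-L_m)$. Only the two endpoints of one gap are used, so no gluing is needed.

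\textbf{Your route.} You argue globally, by self-similarity. $E_n^{(m+1)}$ contains the copies $\phi_v(E_n^{(m)})$ scaled by $\lambda$, these copies cover $[0,1]$, and a gluing step transfers the mesh bound to their union. This shows something slightly more structural: the copies with $v=0$ and $v=1$ already suffice, since $[0,\lambda]\cup[1/n,1]=[0,1]$ because $1/n\le (n-1)/n$. So your third bullet is redundant. In other words, the set generated from $\{0,1\}$ by the two contractions $u\mapsto\lambda u$ and $u\mapsto\lambda u+1-\lambda$ alone is already dense. This is the iterated-function-system picture, in which the overlap condition $\lambda\ge 1/2$ is exactly what makes the attractor all of $[0,1]$. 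The price is the gluing step, which the paper's local argument avoids.

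\textbf{Fix the gluing step's wording.} It is not enough that the piece has points within $\lambda^{m+1}$ of $x$ on each side. Read literally, that gives only a gap bound of $2\lambda^{m+1}$, and then the recursion $g_{m+1}\le 2\lambda g_m$ does not close, since $2\lambda\ge 1$. What you need is that $x$ lies between two \emph{consecutive} points $p\le x\le p'$ of the piece with $p'-p\le\lambda^{m+1}$. The gap of the union containing $x$ is then contained in $[p,p']$. With this phrasing your induction giving mesh at most $\lambda^m$ is complete.
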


\begin{proof}
Fix $t\in[0,1]$. For each $m$, let
\[
L_m:=\max\{x\in E_n^{(m)}:x\le t\},
\qquad
R_m:=\min\{x\in E_n^{(m)}:x\ge t\}.
\]
If $t\in E_n^{(m)}$ for some $m$, there is nothing to prove. Otherwise
$L_m<t<R_m$ for all $m$. Set
\[
A_m=\frac{(n-1)L_m+R_m}{n},
\qquad
B_m=\frac{L_m+(n-1)R_m}{n}.
\]
Then $A_m,B_m\in E_n^{(m+1)}$ and
\[
L_m<A_m\le B_m<R_m.
\]
In the three cases $t\le A_m$, $A_m\le t\le B_m$, and $B_m\le t$, the
interval between the new nearest left and right points has length at most
\[
\frac{n-1}{n}(R_m-L_m).
\]
Hence
\[
R_m-L_m\le\left(\frac{n-1}{n}\right)^m(R_0-L_0)\to0.
\]
Thus points of $E_n$ approximate $t$ arbitrarily well.
\end{proof}

\begin{lem}[Residue invariant]\label{lem:residue-invariant}
If $z\in E_n^{(m)}$ and $z=k/n^m$, then
\[
k\equiv 0 \quad\text{or}\quad 1 \pmod{n-1}.
\]
\end{lem}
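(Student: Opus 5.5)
Since $n\equiv 1 \pmod{n-1}$, the natural invariant is the numerator modulo $n-1$, tracked by induction on $m$. The statement needs care about the representation. An element $z\in E_n^{(m)}$ has a canonical numerator $k=zn^m$ at level $m$; by Lemma~\ref{lem:En-nesting} we have $E_n^{(m)}\subseteq D_{n,m}$, so $k$ is an integer. Because $n^j\equiv 1\pmod{n-1}$, rescaling the level does not change the residue: if $z=k/n^m=k'/n^{m'}$ with $m'\ge m$, then $k'=kn^{m'-m}\equiv k\pmod{n-1}$. So the residue class of the numerator modulo $n-1$ depends only on $z$, and we may work with the numerator at level $m$.

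The induction runs as follows. For $m=0$, $E_n^{(0)}=\{0,1\}$ has numerators $0$ and $1$. Now assume the claim for $E_n^{(m)}$ and take $z=\frac{(n-1)u+v}{n}$ with $u=a/n^m$ and $v=b/n^m$ in $E_n^{(m)}$. Then $z=\frac{(n-1)a+b}{n^{m+1}}$, so its numerator at level $m+1$ is $(n-1)a+b\equiv b\pmod{n-1}$. By hypothesis $b\equiv 0$ or $1\pmod{n-1}$, which closes the induction.

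Only the invariance under change of level is a potential obstacle, since elements of $E_n^{(m)}$ reappear in $E_n^{(m+1)}$ via the nesting and the claim must hold whichever level's numerator is used. The congruence $n\equiv1\pmod{n-1}$ settles this. In the case $n=2$ the modulus is $1$ and the statement is trivial, so nothing extra is needed there. As a consequence for $n\ge4$, the element $2/n\in D_n$ has numerator $2\not\equiv 0,1\pmod{n-1}$, so it lies in $D_n$ but not in $E_n$. This gives the announced proper inclusion $E_n\subsetneq D_n$.
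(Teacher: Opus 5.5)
Your proof is correct and matches the paper's: induction on $m$, writing $u=A/n^m$ and $v=B/n^m$, and noting that the level-$(m+1)$ numerator $(n-1)A+B$ is congruent to $B$ modulo $n-1$. Your extra remark that $n^j\equiv 1\pmod{n-1}$ makes the residue independent of the level is not needed for the lemma itself, since at a fixed level the numerator is simply $zn^m$; it is, however, exactly the observation the paper uses afterwards in Theorem~\ref{thm:En-proper}.
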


\begin{proof}
We argue by induction on $m$. The case $m=0$ is clear. Let
$z\in E_n^{(m+1)}$. Then
\[
z=\frac{(n-1)u+v}{n}
\]
with $u,v\in E_n^{(m)}$. Write
\[
u=\frac{A}{n^m},
\qquad
v=\frac{B}{n^m}.
\]
By the induction hypothesis, $B\equiv0$ or $1\pmod{n-1}$. The numerator of
$z$ at level $m+1$ is
\[
K=(n-1)A+B\equiv B\pmod{n-1}.
\]
Hence $K\equiv0$ or $1\pmod{n-1}$.
\end{proof}

\begin{thm}\label{thm:En-proper}
If $n\ge4$, then $2/n\notin E_n$. Consequently,
$E_n\subsetneq D_n$ for $n\ge4$.
\end{thm}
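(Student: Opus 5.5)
The plan is to use the residue invariant, Lemma~\ref{lem:residue-invariant}, which controls numerators at a fixed level, together with the nesting from Lemma~\ref{lem:En-nesting}. Suppose, for a contradiction, that $2/n\in E_n$. Then $2/n\in E_n^{(m)}$ for some $m$. By nesting we may assume $m\ge1$.

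At level $m$ the point is written as
\[
\frac{2}{n}=\frac{2n^{m-1}}{n^m},
\]
so its numerator is $k=2n^{m-1}$. Since $n\equiv1\pmod{n-1}$, we get $n^{m-1}\equiv1\pmod{n-1}$. Hence
\[
k\equiv 2\pmod{n-1}.
\]
The residue invariant then forces $2\equiv0$ or $2\equiv1\pmod{n-1}$. That means $n-1$ divides $2$ or $1$, so $n-1\le2$, i.e.\ $n\le3$. This contradicts $n\ge4$.

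One point needs care. Lemma~\ref{lem:residue-invariant} is stated for the representation $z=k/n^m$ at the level $m$ where $z\in E_n^{(m)}$. The numerator is therefore uniquely determined once $m$ is fixed, so there is no ambiguity from non-reduced fractions. The congruence $n^{j}\equiv1$ handles every level uniformly, so any admissible $m\ge1$ gives the same residue $2$.

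For the consequence, we have $2/n=2n^{0}/n^{1}\in D_{n,1}\subseteq D_n$, since $2\le n$. Combined with $E_n\subseteq D_n$ from Lemma~\ref{lem:En-nesting}, this gives the proper inclusion $E_n\subsetneq D_n$. There is no real obstacle; the only step needing attention is the level bookkeeping just described.
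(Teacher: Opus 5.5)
Your proof is correct and follows the paper's argument. You assume $2/n\in E_n^{(M)}$ with $M\ge1$, apply the residue invariant to the numerator $2n^{M-1}\equiv2\pmod{n-1}$, and get a contradiction because $n-1\ge3$. Your explicit check that $2/n\in D_{n,1}$, giving the strict inclusion, just spells out what the paper leaves implicit.
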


\begin{proof}
Assume $2/n\in E_n$. By Lemma~\ref{lem:En-nesting}, there is $M\ge1$ such
that $2/n\in E_n^{(M)}$. Writing
\[
\frac{2}{n}=\frac{2n^{M-1}}{n^M},
\]
Lemma~\ref{lem:residue-invariant} gives
\[
2n^{M-1}\equiv0\quad\text{or}\quad1\pmod{n-1}.
\]
Since $n\equiv1\pmod{n-1}$, this implies
\[
2\equiv0\quad\text{or}\quad1\pmod{n-1},
\]
which is impossible for $n\ge4$.
\end{proof}

\begin{rem}\label{rem:role-En}
The preceding results show that the distinction between $D_n$ and $E_n$ is
invisible in the binary and ternary cases, but becomes genuine for $n\ge4$.
This is why the full $D_n$-construction is needed in the general proof, even
though the eventual continuity mechanism only uses the $E_n$-type two-point
averages $((n-1)u+v)/n$.
\end{rem}

\section{The general \texorpdfstring{$D_n$}{Dn}-construction}
\label{sec:Dn-construction}

Let $I=[a,b]$ be a non-degenerate compact interval and let
$F:I^n\to I$ be reflexive, symmetric, bisymmetric and partially strictly
increasing. We construct a strictly increasing function $f:D_n\to I$
satisfying the full averaging identity on $D_n$.

\subsection{Recursive definition and monotonicity}

Set
\[
f(0)=a,
\qquad
f(1)=b.
\]
Assume that $f$ is already defined on $D_{n,m}$. We keep these old
values. For $k\in\{0,\dots,n^m-1\}$ and
$\delta\in\{1,\dots,n-1\}$, define
\begin{equation}\label{eq:Dn-def}
f\!\left(\frac{kn+\delta}{n^{m+1}}\right)=
F\Bigl(
\underbrace{f\!\left(\frac{k}{n^m}\right),\dots,
f\!\left(\frac{k}{n^m}\right)}_{n-\delta},
\underbrace{f\!\left(\frac{k+1}{n^m}\right),\dots,
f\!\left(\frac{k+1}{n^m}\right)}_{\delta}
\Bigr).
\end{equation}
Equivalently, the same formula also holds for $\delta=0$, in which case it
just reproduces the old value by reflexivity. The endpoint $1$ keeps its
previous value $f(1)=b$..

\begin{prop}\label{prop:Dn-monotone}
The recursively defined function $f:D_n\to I$ is strictly increasing.
\end{prop}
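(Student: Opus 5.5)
We argue by induction on the level $m$, proving that $f$ restricted to $D_{n,m}$ is strictly increasing. This mirrors Proposition~\ref{prop:ternary-monotone}. Since $D_n=\bigcup_m D_{n,m}$ and the sets are nested, any two points $x<y$ of $D_n$ lie in a common $D_{n,m}$. So the level-wise statement gives the claim, provided $f$ is well defined. Well-definedness holds because old values are kept: the case $\delta=0$ of \eqref{eq:Dn-def} reproduces $f(k/n^m)$ by reflexivity.

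The base case $D_{n,0}=\{0,1\}$ is just $f(0)=a<b=f(1)$. For the inductive step, assume $f$ is strictly increasing on $D_{n,m}$. Take $x<y$ in $D_{n,m+1}$. Write $x=(kn+\delta)/n^{m+1}$ and $y=(ln+\epsilon)/n^{m+1}$ with $\delta,\epsilon\in\{0,\dots,n-1\}$, $0\le k,l\le n^m$. The point $1$ is written with $k=n^m$, $\delta=0$. For every point except $1$, the value is given by \eqref{eq:Dn-def}, which includes the $\delta=0$ case by reflexivity. We split into two cases.

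\emph{Case $k=l$.} Then $\delta<\epsilon$, and $k<n^m$ since $y\neq 1$ would otherwise be forced. The argument list for $f(y)$ arises from that of $f(x)$ by replacing $\epsilon-\delta\ge1$ copies of $f(k/n^m)$ with $f((k+1)/n^m)$. That value is strictly larger by the induction hypothesis. Using symmetry to align positions, and applying partial strict increase one coordinate at a time, we get $f(x)<f(y)$.

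\emph{Case $k<l$.} Every argument of $f(x)$ is $f(k/n^m)$ or $f((k+1)/n^m)$. By the induction hypothesis, each is at most $f(l/n^m)$, because $k+1\le l$. Every argument of $f(y)$ is at least $f(l/n^m)$. If $y=1$, we use $f(1)=F(b,\dots,b)$ by reflexivity. Some argument of $f(x)$ equals $f(k/n^m)<f(l/n^m)$: this holds since $n-\delta\ge1$. Hence $x$'s arguments are coordinatewise $\le$ those of $y$, with at least one strict inequality, and monotonicity of $F$ gives $f(x)<f(y)$. Alternatively, one can compare both values with $f(l/n^m)$. Lemma~\ref{lem:strict-mean} gives $f(x)\le\max=f((k+1)/n^m)\le f(l/n^m)\le f(y)$, with strictness from the strict mean property. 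The coordinatewise comparison is cleaner, though.

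There is no serious obstacle here. The only delicate points are bookkeeping ones: handling the endpoint $1$ and the $\delta=0$ representation uniformly, and checking that the comparison of argument lists really is coordinatewise after a symmetric rearrangement. I would handle both by writing every value in the uniform form \eqref{eq:Dn-def}. Each argument list is sorted in increasing order, and the $i$-th smallest entries are then compared.
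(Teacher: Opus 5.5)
Your proof is correct and follows the paper's argument. It uses induction on the level $m$, with the case $k=l$ (replace $\epsilon-\delta\ge1$ copies of $f(k/n^m)$ by the strictly larger $f((k+1)/n^m)$) and the case $k<l$ (coordinatewise domination via $(k+1)/n^m\le l/n^m$, with at least one strict inequality), and the same convention for the endpoint $1$. One small wording fix: in the case $k=l$, the reason $k<n^m$ is that otherwise $\delta=\epsilon=0$, forcing $x=y=1$.
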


\begin{proof}
The proof is the same induction as in the ternary case.

We use the convention that the endpoint $1\in D_{n,m+1}$ is represented as
\[
1=\frac{n^m n+0}{n^{m+1}},
\]
and in this case the upper block in \eqref{eq:Dn-def} is empty.

Suppose $f$ is strictly increasing on $D_{n,m}$ and
let
\[
x=\frac{pn+\delta}{n^{m+1}}<\frac{qn+\epsilon}{n^{m+1}}=y.
\]

If $p=q$, then $\delta<\epsilon$ and the defining expression for $f(y)$ is obtained from that for $f(x)$ by
replacing at least one lower endpoint value by the strictly larger upper endpoint value.

If $p<q$, then
\[
\frac{p}{n^m}<\frac{q}{n^m},
\qquad
\frac{p+1}{n^m}\le\frac{q}{n^m},
\]
so again the entries defining $f(x)$ are coordinatewise no larger than those defining $f(y)$, with at least one
strict inequality. Partial strict increase of $F$ gives $f(x)<f(y)$.
\end{proof}

\subsection{The averaging identity on \texorpdfstring{$D_n$}{Dn}}

\begin{thm}\label{thm:Dn-identity}
For all $x_1,\dots,x_n\in D_n$,
\[
F(f(x_1),\dots,f(x_n))=
f\!\left(\frac{x_1+\cdots+x_n}{n}\right).
\]
\end{thm}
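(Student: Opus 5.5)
We prove the identity on $D_{n,m}$ by induction on the common level $m$, generalizing the proof of Theorem~\ref{thm:ternary-identity}. Precisely, the claim $P(m)$ is: for all $x_1,\dots,x_n\in D_{n,m}$,
\[
F(f(x_1),\dots,f(x_n))=f\!\left(\frac{x_1+\cdots+x_n}{n}\right).
\]
Since $D_n=\bigcup_m D_{n,m}$ and the levels are nested, any $n$-tuple of $n$-adic rationals lies in some common $D_{n,m}$, so $P(m)$ for all $m$ gives the theorem.

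\emph{Base case $m=0$.} Let $j$ of the $x_i$ equal $1$ and the rest equal $0$. By symmetry, $F(f(x_1),\dots,f(x_n))=F(a,\dots,a,b,\dots,b)$ with $b$ repeated $j$ times. For $j<n$, this is exactly the value $f(j/n)$ given by \eqref{eq:Dn-def} with $m=0$, $k=0$, $\delta=j$. For $j=n$, it equals $b=f(1)$ by reflexivity.

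\emph{Inductive step.} Assume $P(m)$ and write $x_i=(k_in+\delta_i)/n^{m+1}$ with $\delta_i\in\{0,\dots,n-1\}$, using the endpoint convention from Proposition~\ref{prop:Dn-monotone} when $x_i=1$. By \eqref{eq:Dn-def}, $f(x_i)$ is the $F$-value of a row containing $n-\delta_i$ copies of $f(k_i/n^m)$ and $\delta_i$ copies of $f((k_i+1)/n^m)$. Set $\Delta=\sum_i\delta_i=nu+\delta$ with $0\le\delta\le n-1$.

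We form an $n\times n$ array whose $i$-th row carries the $\delta_i$ "upper" entries. Symmetry of $F$ lets us place them in any columns within that row. We place them cyclically: list the upper slots row by row and assign them to columns $1,2,\dots,n,1,2,\dots$ in order. Since each $\delta_i\le n-1<n$, the slots within a single row land in distinct columns, so this is a legitimate placement. Column $c$ then receives $u+1$ upper entries if $c\le\delta$ and $u$ otherwise. Each column contains exactly one entry from each row $i$, namely $f(k_i/n^m)$ or $f((k_i+1)/n^m)$. Setting $K=\sum_i k_i$, the arguments of these entries lie in $D_{n,m}$ and sum to $(K+u_c)/n^m$, where $u_c\in\{u,u+1\}$.

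Apply bisymmetry to replace the row evaluation by the column evaluation. By $P(m)$, column $c$ has value $f\bigl((K+u_c)/n^{m+1}\bigr)$. Both $(K+u)/n^{m+1}$ and $(K+u+1)/n^{m+1}$ lie in $D_{n,m+1}$, whose values are kept at level $m+2$. The outer $F$ is therefore applied to $n-\delta$ copies of $f\bigl((K+u)/n^{m+1}\bigr)$ and $\delta$ copies of $f\bigl((K+u+1)/n^{m+1}\bigr)$. By \eqref{eq:Dn-def} at level $m+1$, this equals
\[
f\!\left(\frac{n(K+u)+\delta}{n^{m+2}}\right)=f\!\left(\frac{nK+\Delta}{n^{m+2}}\right)=f\!\left(\frac{x_1+\cdots+x_n}{n}\right).
\]
When $\delta=0$ the same equality follows from reflexivity instead.

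The main obstacle is the bookkeeping in the inductive step. First, the cyclic assignment must be checked to be a valid arrangement, which rests on $\delta_i<n$. Second, the column arguments must lie in $D_{n,m}\cap[0,1]$ so that $P(m)$ applies, which holds because each entry's argument is at most $1$. Third, the final application of \eqref{eq:Dn-def} needs $K+u+1\le n^{m+1}$ whenever $\delta>0$. This holds because the total $nK+\Delta$ is at most $n^{m+2}$, with equality only when $\delta=0$. The endpoint value $x_i=1$ also needs care, and it is handled by the convention $\delta_i=0$, $k_i=n^m$ from Proposition~\ref{prop:Dn-monotone}.
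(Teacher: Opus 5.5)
Your proof is correct and follows the paper's argument essentially step for step. It proceeds by induction on the level $m$, places the upper entries cyclically within rows via symmetry (valid since $\delta_i\le n-1$), uses bisymmetry to pass to column evaluation, applies the induction hypothesis to each column, and finishes with the recursive definition at the next level. Your explicit check that $K+u+1\le n^{m+1}$ whenever $\delta>0$ is a detail the paper leaves implicit, and you handle it correctly.
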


\begin{proof}
We prove the assertion by induction on $m$. More precisely, we prove that
the identity holds whenever $x_1,\dots,x_n\in D_{n,m}$.

For $m=0$, each $x_i$ is either $0$ or $1$. If exactly $\delta$ of
the numbers $x_1,\dots,x_n$ are equal to $1$, then, by symmetry of $F$
and by the definition of $f$,
\[
F(f(x_1),\dots,f(x_n))
=
F(\underbrace{a,\dots,a}_{n-\delta},
  \underbrace{b,\dots,b}_{\delta})
=
f\left(\frac{\delta}{n}\right)
=
f\!\left(\frac{x_1+\cdots+x_n}{n}\right).
\]
Thus the assertion holds for $m=0$.

Assume now that the assertion is already known on $D_{n,m}$, and let
$x_1,\dots,x_n\in D_{n,m+1}$. Write
\[
x_i=\frac{k_i n+\delta_i}{n^{m+1}},
\qquad
0\le \delta_i\le n-1,
\]
where $0\le k_i\le n^m$, and where $k_i=n^m$ can occur only when
$\delta_i=0$, that is, only for the endpoint $x_i=1$.
Whenever $\delta_i=0$, the upper block below is empty and no value
$f((k_i+1)/n^m)$ is involved.
Then, by the recursive definition of $f$,
\begin{multline*}
F(f(x_1),\dots,f(x_n))  \\
=
F\Biggl(
F\left(
\underbrace{f\!\left(\frac{k_1}{n^m}\right),\dots,
f\!\left(\frac{k_1}{n^m}\right)}_{n-\delta_1},
\underbrace{f\!\left(\frac{k_1+1}{n^m}\right),\dots,
f\!\left(\frac{k_1+1}{n^m}\right)}_{\delta_1}
\right),
\\
\ldots,
\\
F\left(
\underbrace{f\!\left(\frac{k_n}{n^m}\right),\dots,
f\!\left(\frac{k_n}{n^m}\right)}_{n-\delta_n},
\underbrace{f\!\left(\frac{k_n+1}{n^m}\right),\dots,
f\!\left(\frac{k_n+1}{n^m}\right)}_{\delta_n}
\right)
\Biggr).
\end{multline*}

Set
\[
\delta_1+\cdots+\delta_n=qn+\delta,
\qquad
q\in\{0,\dots,n-1\},\quad \delta\in\{0,\dots,n-1\},
\]
and
\[
K=k_1+\cdots+k_n+q.
\]

We now use symmetry of $F$ inside each of the inner $F$'s. In the first
row we put the $+1$-elements in positions
\[
1,\dots,\delta_1.
\]
In the second row we put the $+1$-elements immediately after these positions,
cyclically modulo $n$. In general, if the first
$\delta_1+\cdots+\delta_{i-1}$ positions have already been used, then in the
$i$-th row we put the $\delta_i$ many $+1$-elements in the next
$\delta_i$ positions, again cyclically modulo $n$.

Since $\delta_i\le n-1$, no position is repeated inside a single row, so this
is indeed only a rearrangement of the arguments in that row. Hence the value
of the whole expression is unchanged by symmetry.

After all rows have been rearranged in this way, the total number of
$+1$-elements is
\[
\delta_1+\cdots+\delta_n=qn+\delta.
\]
Therefore, among the $n$ columns, exactly $\delta$ columns contain
$q+1$ many $+1$-elements, and the remaining $n-\delta$ columns contain
$q$ many $+1$-elements.

Now we apply bisymmetry. This allows us to transpose the resulting
$n\times n$ array of arguments: instead of first applying $F$ along the
rows and then applying $F$ to the obtained $n$ values, we may first apply
$F$ along the columns and then apply $F$ to the obtained $n$ values.

Consider one of the columns. If this column contains $q+1$ many
$+1$-elements, then, by the induction hypothesis applied to the $n$ points
of $D_{n,m}$ appearing in this column, the value of the corresponding inner
$F$ is
\[
f\!\left(
\frac{k_1+\cdots+k_n+q+1}{n^{m+1}}
\right)
=
f\!\left(\frac{K+1}{n^{m+1}}\right).
\]
Similarly, if the column contains $q$ many $+1$-elements, then the value of
the corresponding inner $F$ is
\[
f\!\left(
\frac{k_1+\cdots+k_n+q}{n^{m+1}}
\right)
=
f\!\left(\frac{K}{n^{m+1}}\right).
\]

Thus, after the application of bisymmetry and of the induction hypothesis, we
obtain
\begin{multline*}
F(f(x_1),\dots,f(x_n))
\\
=
F\left(
\underbrace{
f\!\left(\frac{K+1}{n^{m+1}}\right),\dots,
f\!\left(\frac{K+1}{n^{m+1}}\right)
}_{\delta},
\underbrace{
f\!\left(\frac{K}{n^{m+1}}\right),\dots,
f\!\left(\frac{K}{n^{m+1}}\right)
}_{n-\delta}
\right).
\end{multline*}
By symmetry we may place the $n-\delta$ copies of
$f(K/n^{m+1})$ first and the $\delta$ copies of
$f((K+1)/n^{m+1})$ last. Hence, by the recursive definition of $f$,
\[
F(f(x_1),\dots,f(x_n))
=
f\!\left(\frac{Kn+\delta}{n^{m+2}}\right).
\]
Finally, since
\[
K=k_1+\cdots+k_n+q
\]
and
\[
\delta_1+\cdots+\delta_n=qn+\delta,
\]
we have
\[
Kn+\delta
=
n(k_1+\cdots+k_n)+qn+\delta
=
n(k_1+\cdots+k_n)+(\delta_1+\cdots+\delta_n).
\]
Therefore
\[
\frac{Kn+\delta}{n^{m+2}}
=
\frac{(k_1n+\delta_1)+\cdots+(k_n n+\delta_n)}{n^{m+2}}
=
\frac{x_1+\cdots+x_n}{n}.
\]
Consequently,
\[
F(f(x_1),\dots,f(x_n))
=
f\!\left(\frac{x_1+\cdots+x_n}{n}\right),
\]
which completes the induction step.
\end{proof}

\section{A dense-domain continuity lemma}
\label{sec:dense-lemma}

For weights $w_1,\dots,w_n>0$ with $\sum_i w_i=1$, write
\[
m_w(x_1,\dots,x_n)=\sum_{i=1}^n w_i x_i.
\]

\begin{prop}[Continuity and representation from a dense domain]\label{prop:dense-domain}
Let $I=[a,b]$ be a non-degenerate compact interval and let
$F:I^n\to I$ be partially strictly increasing. Let $S\subset[0,1]$ be dense,
contain $0$ and $1$, and be closed under $m_w$. Suppose that
$f:S\to I$ is strictly increasing, $f(0)=a$, $f(1)=b$, and
\begin{equation}\label{eq:dense-identity}
F(f(x_1),\dots,f(x_n))=f(m_w(x_1,\dots,x_n))
\qquad (x_i\in S).
\end{equation}
Then $f(S)$ is dense in $I$. Consequently $f$ extends uniquely to a
continuous strictly increasing bijection
\[
\widetilde f:[0,1]\to I,
\]
and
\begin{equation}\label{eq:dense-representation}
F(u_1,\dots,u_n)=
\widetilde f\!\left(\sum_{i=1}^n w_i\widetilde f^{-1}(u_i)\right)
\qquad (u_i\in I).
\end{equation}
In particular, $F$ is continuous.
\end{prop}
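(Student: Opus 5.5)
The plan is to prove density of $f(S)$ by contradiction through a gap argument. Once density is known, extend $f$ by monotone limits, and then obtain \eqref{eq:dense-representation} by squeezing.

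\emph{Step 1 (gap argument).} Suppose $f(S)$ is not dense in $I$. Since $f$ is strictly increasing and $S$ is dense, there is a point $t_0\in[0,1]$ where the one-sided limits $f(t_0^-)=\sup\{f(s):s\in S,\ s<t_0\}$ and $f(t_0^+)=\inf\{f(s):s\in S,\ s>t_0\}$ leave a gap. We interpret $f(0^-)=a$ and $f(1^+)=b$, and allow a gap between $f(t_0)$ and one of these limits when $t_0\in S$. Concretely, there are $\alpha<\beta$ in $I$ with $(\alpha,\beta)\cap f(S)=\emptyset$, and $t_0=\sup\{s\in S: f(s)\le\alpha\}$.

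Pick the coordinate with weight $w_1$ and fix $x_2=\dots=x_n=y\in S$. For $s\in S$ consider $g(s)=m_w(s,y,\dots,y)=w_1s+(1-w_1)y$. This map contracts distances by the factor $w_1<1$ and maps $S$ into $S$. Applying \eqref{eq:dense-identity} and the strict monotonicity of $F$ in the first variable, a jump of $f$ at $t_0$ transfers to a jump of $f$ at $g(t_0)=w_1t_0+(1-w_1)y$. Indeed, if $s<t_0<s'$ in $S$, then
\[
F(f(s),f(y),\dots)<F(\alpha',f(y),\dots)<F(f(s'),f(y),\dots)
\]
for any $\alpha'$ in the gap. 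So the left-hand values sandwich a value of $F$ that is not attained by $f$ on $S$, since it lies strictly between $f(g(s))$ and $f(g(s'))$. The plan is then to show the gap produced at $g(t_0)$ contains no value of $f(S)$, using monotonicity of $f$ together with the fact that $g(S)$ is cofinal near $g(t_0)$. Varying $y$ over $S\cap[0,1]$ gives jumps of $f$ at every point of $w_1t_0+(1-w_1)S$, which is a dense subset of an interval of positive length $1-w_1$.

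\emph{Step 2 (uncountably many, or summable, jumps).} A monotone function on a dense set has only countably many jumps, and the total size of its jumps is at most $b-a$. The set of jump points found in Step 1 is only countable, so countability alone gives no contradiction. I would instead iterate the contraction to obtain quantitative control. The jump sizes $F(\beta',f(y),\dots)-F(\alpha',f(y),\dots)$ are not obviously bounded below, so I would choose the family of points differently: iterating $g$ with different $y$'s yields infinitely many disjoint gaps in $I$. To conclude, I need a lower bound on their lengths, or else a different contradiction.

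The better route, and the one I would try first, is to use reflexivity-free structure only through $F$. Take $y=t_0$ itself when $t_0\in S$ (or approximate it), and iterate the identity. Then the image gap $(F(\alpha,f(y),\dots),F(\beta,f(y),\dots))$, evaluated at all coordinates simultaneously, is again a gap at $t_0$, because $m_w(t_0,\dots,t_0)=t_0$ and $F$ is strictly increasing in all variables. Thus the gap at $t_0$ is mapped inside itself by $u\mapsto F(u,\dots,u)$ restricted to $f(S)$-limits. Combining this with gaps at the points $w_1t_0+(1-w_1)y$ for $y\in S$ near $t_0$ gives infinitely many disjoint gaps inside a bounded window. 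Each of these dominates a fixed gap through monotonicity comparisons, which is the desired contradiction.

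\emph{Step 3 (extension and representation).} With $f(S)$ dense, define $\widetilde f(t)=\sup\{f(s):s\le t\}$. This function is strictly increasing, has no jumps, and is onto $I$, so it is a homeomorphism. For $u_i\in I$, choose $s_i^\pm\in S$ with $\widetilde f^{-1}(u_i)$ lying between $s_i^-$ and $s_i^+$ and with $s_i^+-s_i^-\to0$. Monotonicity of $F$ and \eqref{eq:dense-identity} squeeze $F(u)$ between $f(m_w(s^-))$ and $f(m_w(s^+))$, and continuity of $\widetilde f$ gives \eqref{eq:dense-representation}.

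The main obstacle is Step 1–2: turning a single gap into a contradiction. I expect the cleanest argument to locate the gap at the point $t_0$, apply the identity with $n-1$ arguments pinned at points of $S$ on either side of $t_0$, and compare, so that the image gap strictly straddles a value $f(s)$ with $s\in S$.
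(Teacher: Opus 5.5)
\paragraph{Verdict.} Your Step~3 is the paper's extension-and-squeeze argument. Your Step~1 transfer is also correct: a gap of $f$ at $t_0$ propagates, via \eqref{eq:dense-identity} and strict monotonicity in $x_1$, to a gap at every point $w_1t_0+(1-w_1)y$ with $y\in S$. However, the proposal never turns this into a contradiction, and neither route in Step~2 can do so.

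\paragraph{Why Step~2 fails.} The gaps indexed by $y\in S$ form a countable family. A monotone function may have countably many jumps, of total size at most $b-a$, so nothing is contradicted yet.

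Your proposed remedy is a lower bound on the gap lengths, and it is not available. Let $X<Y$ be the one-sided limits of $f$ at $t_0$. The only estimate you have for the gap at $w_1t_0+(1-w_1)y$ is $F(Y,f(y),\dots,f(y))-F(X,f(y),\dots,f(y))$. Since $F$ is merely strictly increasing, with no continuity or uniformity, nothing prevents these quantities from tending to $0$ as $y$ varies.

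The \emph{better route} supplies no such bound either. Mapping the gap at $t_0$ into itself when $y=t_0$ gives nested gaps but no size information. The phrase \emph{dominates a fixed gap through monotonicity comparisons} names no comparison that would bound a gap at one point from below by a gap at another. Your closing suggestion, an image gap straddling some $f(s)$, is not contradictory either: by the same transfer argument, such an image interval can contain at most the value of $f$ at its own jump point.

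\paragraph{The missing idea: uncountability.} You need to let the free coordinate run over values outside $f(S)$. Then prove that the resulting value intervals are disjoint by an order comparison, not by lengths.

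The paper notes that $\widehat f(x)=\sup\{f(s):s\in S,\ s\le x\}$ is strictly increasing on $[0,1]$. Hence $\overline{f(S)}$ is uncountable and has uncountably many two-sided accumulation points $t$. Placing the gap $X<Y$ at a cut $z$ in the first $n-1$ coordinates, the paper shows that the non-empty open intervals $\Phi_t=(F(X,\dots,X,t),F(Y,\dots,Y,t))$ are pairwise disjoint. For $t_1<t_2$:
\begin{itemize}
\item choose $p,q\in S$ with $t_1<f(p)<f(q)<t_2$;
\item then choose $d<z<D$ in $S$ with $f(d)<X<Y<f(D)$ and $(1-w_n)(D-d)<w_n(q-p)$.
\end{itemize}
Then $(1-w_n)D+w_np<(1-w_n)d+w_nq$, and \eqref{eq:dense-identity} with partial strict monotonicity gives
\[
F(Y,\dots,Y,t_1)<F(f(D),\dots,f(D),f(p))<F(f(d),\dots,f(d),f(q))<F(X,\dots,X,t_2).
\]
The decisive point is that the gap $X<Y$ stays fixed while $D-d$ can be made arbitrarily small, so the shift $q-p$ in the free coordinate dominates. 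Uncountably many pairwise disjoint non-empty open intervals in $I$ give the contradiction.

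Your own orientation (gap in $x_1$, the other coordinates pinned) works equally well with the condition $w_1(D-d)<(1-w_1)(q-p)$. But this only works once the pinned value $f(y)$, $y\in S$, is replaced by uncountably many values from $\overline{f(S)}$ and disjointness is argued in this way.
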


\begin{proof}
Define
\[
\widehat f(x)=\sup\{f(s):s\in S,\ s\le x\}\qquad (x\in[0,1]).
\]
Then $\widehat f$ is an increasing extension of $f$. It is strictly
increasing: if $x<y$, choose $s,t\in S$ with $x<s<t<y$; then
\[
\widehat f(x)\le f(s)<f(t)\le\widehat f(y).
\]
Hence $\overline{f(S)}$ is uncountable.

A compact subset of the real line has at most countably many isolated points and
at most countably many one-sided accumulation points. Therefore
$\overline{f(S)}$ has uncountably many two-sided accumulation points.

Assume, for contradiction, that $f(S)$ is not dense in $I$. Since $f$ is
strictly increasing and $S$ is dense, this gives a cut $z\in(0,1)$ such that
\[
X:=\sup\{f(s):s\in S,\ s<z\}
<
Y:=\inf\{f(s):s\in S,\ s>z\}.
\]
Let $\alpha<\beta$ be two distinct two-sided accumulation points of
$\overline{f(S)}$. Choose $p,q\in S$ with
\[
\alpha<f(p)<f(q)<\beta.
\]
Choose $d,D\in S$ with $d<z<D$ so close to $z$ that
\[
f(d)<X<Y<f(D)
\]
and
\[
D-d<\frac{w_n}{1-w_n}(q-p).
\]
Then
\[
A_1:=(1-w_n)D+w_np,
\qquad
A_2:=(1-w_n)d+w_nq
\]
belong to $S$ and satisfy $A_1<A_2$. Thus $f(A_1)<f(A_2)$, and by
\eqref{eq:dense-identity},
\[
F(\underbrace{f(D),\dots,f(D)}_{n-1},f(p))
<
F(\underbrace{f(d),\dots,f(d)}_{n-1},f(q)).
\]
Together with partial strict increase this yields
\begin{align*}
F(\underbrace{f(d),\dots,f(d)}_{n-1},f(p))
&<F(\underbrace{f(D),\dots,f(D)}_{n-1},f(p))\\
&<F(\underbrace{f(d),\dots,f(d)}_{n-1},f(q))\\
&<F(\underbrace{f(D),\dots,f(D)}_{n-1},f(q)).
\end{align*}
Since $f(d)<X<Y<f(D)$, we get
\[
F(\underbrace{Y,\dots,Y}_{n-1},f(p))
<
F(\underbrace{X,\dots,X}_{n-1},f(q)).
\]
Using $\alpha<f(p)<f(q)<\beta$, this implies
\[
F(\underbrace{Y,\dots,Y}_{n-1},\alpha)
<
F(\underbrace{X,\dots,X}_{n-1},\beta).
\]
The above argument applies to every pair $\alpha<\beta$ of two-sided accumulation
points. Hence, if $t_1<t_2$ are two such points, then
\[
F(\underbrace{Y,\dots,Y}_{n-1},t_1)
<
F(\underbrace{X,\dots,X}_{n-1},t_2).
\]
Therefore, the intervals
\[
\Phi_t:=\left(
F(\underbrace{X,\dots,X}_{n-1},t),
F(\underbrace{Y,\dots,Y}_{n-1},t)
\right)
\]
are pairwise disjoint as $t$ runs through the set of two-sided accumulation
points of $\overline{f(S)}$. Each $\Phi_t$ is non-empty because $X<Y$ and
$F$ is strictly increasing in the first $n-1$ variables. This gives
uncountably many pairwise disjoint non-empty open intervals in $I$, impossible.
Hence $f(S)$ is dense in $I$.

Since $f(S)$ is dense, the monotone extension $\widehat f$ has no jumps; a
jump would create a non-empty open interval missing from $f(S)$. Hence
$\widehat f$ is continuous. Set $\widetilde f=\widehat f$. Then
$\widetilde f$ is the unique continuous strictly increasing bijective extension
of $f$.

It remains to extend the identity. Let $u_i\in I$ and set
$t_i=\widetilde f^{-1}(u_i)$. Choose sequences
$r_i^{(m)},s_i^{(m)}\in S$ with
\[
r_i^{(m)}\le t_i\le s_i^{(m)},
\qquad
r_i^{(m)}\to t_i,
\qquad
s_i^{(m)}\to t_i.
\]
Then
\[
f(r_i^{(m)})\le u_i\le f(s_i^{(m)}).
\]
By partial monotonicity and \eqref{eq:dense-identity},
\[
\widetilde f(m_w(r_1^{(m)},\dots,r_n^{(m)}))
\le F(u_1,\dots,u_n)
\le
\widetilde f(m_w(s_1^{(m)},\dots,s_n^{(m)})).
\]
Letting $m\to\infty$ and using continuity of $\widetilde f$ gives
\eqref{eq:dense-representation}.
\end{proof}

\begin{rem}\label{rem:En-in-dense}
The proof of Proposition~\ref{prop:dense-domain} uses only barycenters of the
form
\[
(1-w_n)u+w_nv.
\]
In the equal-weight case $w_i=1/n$, these are exactly the $E_n$-type points
\[
\frac{(n-1)u+v}{n}.
\]
This is the precise sense in which $E_n$ is the natural control set for
continuity part of the argument.
\end{rem}

\section{Proof of the main theorem}
\label{sec:proof-main}

\begin{proof}[Proof of Theorem~\ref{thm:main}]
By Lemma~\ref{lem:compact-reduction}, it is enough to prove the theorem on
compact intervals. Thus let $I=[a,b]$ be a non-degenerate compact interval.

By Theorem~\ref{thm:Dn-identity}, the recursively constructed function
$f:D_n\to I$ satisfies
\[
F(f(x_1),\dots,f(x_n))
=
f\!\left(\frac{x_1+\cdots+x_n}{n}\right)
\qquad (x_i\in D_n).
\]
By Proposition~\ref{prop:Dn-monotone}, $f$ is strictly increasing, and by
construction $f(0)=a$, $f(1)=b$. By Lemma~\ref{lem:Dn-dense}, $D_n$ is
dense in $[0,1]$ and closed under $n$-fold arithmetic averaging. Thus
Proposition~\ref{prop:dense-domain}, applied with $S=D_n$ and equal weights
$w_i=1/n$, gives a continuous strictly increasing bijection
$\widetilde f:[0,1]\to I$ such that
\[
F(u_1,\dots,u_n)=
\widetilde f\!\left(
\frac{\widetilde f^{-1}(u_1)+\cdots+\widetilde f^{-1}(u_n)}{n}
\right).
\]
Equivalently, with
\[
\varphi=\widetilde f^{-1}:I\to[0,1],
\]
we have
\[
F(u_1,\dots,u_n)=
\varphi^{-1}\!\left(
\frac{\varphi(u_1)+\cdots+\varphi(u_n)}{n}
\right).
\]
Thus $F$ is continuous and has the required quasi-arithmetic representation
on compact intervals. The general interval case follows from
Lemma~\ref{lem:compact-reduction}.
\end{proof}

\section{Application: Kolmogorov means without a continuity assumption}
\label{sec:kolmogorov}

We now show how the same dense-domain argument removes the continuity assumption
from the Kolmogorov--Nagumo--de Finetti compatible-family theorem, provided
strict monotonicity is assumed. More precisely, we prove the following compact
form of Theorem~\ref{thm:KND-without-continuity}; the general interval case
then follows by the same compact-reduction argument as before, and the two
representations are equivalent after writing $\varphi=\psi^{-1}$.

Let $I=[a,b]$ be a non-degenerate compact interval. Consider a family of maps
\[
M_n:I^n\to I\qquad(n\ge1),
\]
with $M_1(x)=x$. We assume that the family satisfies the replacement axiom
from Definition~\ref{def:replacement}; explicitly, for all $k,m\ge1$,
\begin{multline}\label{eq:Kolmogorov-axiom}
M_{k+m}(x_1,\dots,x_k,y_1,\dots,y_m)\\
=
M_{k+m}\bigl(
\underbrace{M_k(x_1,\dots,x_k),\dots,M_k(x_1,\dots,x_k)}_{k},
y_1,\dots,y_m
\bigr).
\end{multline}
By symmetry, the same replacement can be applied to any block of variables.

\begin{thm}[A Kolmogorov-type theorem without a continuity assumption]\label{thm:kolmogorov}
Assume that every $M_n$ is reflexive, symmetric and partially strictly increasing in
each variable, and that \eqref{eq:Kolmogorov-axiom} holds. Then there exists a
continuous strictly increasing bijection $\psi:[0,1]\to I$ such that, for
every $n\ge1$,
\[
M_n(x_1,\dots,x_n)=
\psi\!\left(\frac{\psi^{-1}(x_1)+\cdots+\psi^{-1}(x_n)}{n}\right)
\qquad (x_i\in I).
\]
In particular, every $M_n$ is continuous.
\end{thm}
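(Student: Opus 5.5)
The plan is to reduce to the main theorem for a single arity and then propagate the representation to all $n$ using the replacement axiom. First I show that each $M_n$ with $n\ge2$ is bisymmetric. Apply \eqref{eq:Kolmogorov-axiom} repeatedly, using symmetry to move any block of variables into the first positions. For any $n\times n$ array this gives
\[
M_{n^2}(x_{1,1},\dots,x_{n,n})=M_{n^2}\bigl(\underbrace{R_1,\dots,R_1}_{n},\dots,\underbrace{R_n,\dots,R_n}_{n}\bigr),
\qquad R_i=M_n(x_{i,1},\dots,x_{i,n}),
\]
by replacing the $n$ row blocks one at a time. I then want to collapse the right-hand side to $M_n(R_1,\dots,R_n)$. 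To do this I need the repetition identity
\[
M_{kn}(\underbrace{y_1,\dots,y_1}_{k},\dots,\underbrace{y_n,\dots,y_n}_{k})=M_n(y_1,\dots,y_n).
\]
Set $z=M_n(y_1,\dots,y_n)$. Applying the replacement axiom to the block consisting of one copy of each $y_j$, repeated $k$ times (via symmetry), turns the left-hand side into $M_{kn}(z,\dots,z)=z$ by reflexivity. With this identity, $M_{n^2}$ of the array equals $M_n(R_1,\dots,R_n)$. The same computation with columns gives $M_n(C_1,\dots,C_n)$, and symmetry of $M_{n^2}$ yields bisymmetry of $M_n$.

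Next I fix $n=2$ and apply Theorem~\ref{thm:main}, or Theorem~\ref{thm:BKSz}. This gives a continuous strictly increasing bijection $\psi:[0,1]\to I$ with $M_2(x,y)=\psi((\psi^{-1}x+\psi^{-1}y)/2)$; here I normalize the generator $\varphi$ to $[0,1]$ and to be increasing by an affine change. For general $n$ I would avoid relying on uniqueness of generators across arities. Instead I show directly that $M_n$ is generated by the same $\psi$, by working on dyadic rationals with $g=\psi|_{D_2}$.

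The key step, and the main obstacle, is to relate $M_n$ and $M_2$. Set $N=2^r\ge n$. By Theorem~\ref{thm:n4}-type reasoning (Proposition~\ref{prop:product-arity} with repetition), $M_N$ is the iterated binary mean, hence $M_N=\psi(\text{arithmetic mean of }\psi^{-1})$. Now take $x_1,\dots,x_n$ and put $z=M_n(x_1,\dots,x_n)$. Using the replacement axiom inside $M_N$ on the block $x_1,\dots,x_n$ gives
\[
M_N(x_1,\dots,x_n,\underbrace{z,\dots,z}_{N-n})=M_N(z,\dots,z)=z.
\]
Writing $t_i=\psi^{-1}(x_i)$ and $s=\psi^{-1}(z)$, the $M_N$ formula then reads $\bigl(\sum_i t_i+(N-n)s\bigr)/N=s$, so $s=\sum_i t_i/n$. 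This yields the formula for $M_n$ directly, with the block placed first by symmetry. So the only real work is confirming that $M_{2^r}$ is the iterated $M_2$; this follows from the repetition identity and the replacement axiom, exactly as in \eqref{eq:n4-key} and Proposition~\ref{prop:product-arity}, by induction on $r$.

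Continuity of every $M_n$ is then immediate from the formula, and the general interval case follows as in Lemma~\ref{lem:compact-reduction}.
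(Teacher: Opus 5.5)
Your proof is correct, but it takes a genuinely different route from the paper.

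\textbf{The paper's route.} It never uses bisymmetry or any binary theorem here. It defines $\psi$ directly on $\Q\cap[0,1]$ by $\psi(p/q)=M_q(a,\dots,a,b,\dots,b)$, with $q-p$ copies of $a$ and $p$ copies of $b$. It checks well-definedness and strict increase via the replication identity. It derives $M_n(\psi(z_1),\dots,\psi(z_n))=\psi\bigl((z_1+\cdots+z_n)/n\bigr)$ from the replacement axiom alone. It then applies Proposition~\ref{prop:dense-domain} to each $M_n$ separately; the extension does not depend on $n$ because it is the unique continuous extension of the same $\psi$.

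\textbf{Your route.} You first extract bisymmetry of $M_2$ from the replacement axiom, via $M_4$ and your repetition identity. You then invoke binary automatic continuity (Theorem~\ref{thm:BKSz}, or Theorem~\ref{thm:main} with $n=2$) to obtain $\psi$. Next you show that $M_{2^r}$ is the iterated binary mean. Your derivation of this step uses only replacement plus repetition, which is right; it does not need the bisymmetry that \eqref{eq:n4-key} uses. Finally you apply the classical Kolmogorov padding identity $M_N(x_1,\dots,x_n,z,\dots,z)=z$. This is valid for $N>n$, since the axiom requires $m\ge1$, and the case $N=n$ is trivial.

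\textbf{What each approach buys.} Your route shows that the whole family is algebraically determined by $M_2$. Strict monotonicity, and the automatic-continuity machinery, is used only for $M_2$; the higher $M_n$ need only reflexivity, symmetry and the replacement axiom. The paper's route is self-contained and uniform in $n$. It bypasses bisymmetry and the binary theorem altogether; that theorem itself rests either on the dyadic construction plus the same dense-domain lemma, or on the external Burai--Kiss--Szokol result. It also gives $\psi$ explicitly on the rationals.

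Your closing remark about general intervals is unnecessary: the statement concerns compact $I=[a,b]$, as a bijection $\psi:[0,1]\to I$ already forces.
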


\begin{proof}
First we prove a replication identity. For every $p,m\ge1$ and
$x_1,\dots,x_m\in I$,
\begin{equation}\label{eq:replication}
M_{pm}(\underbrace{x_1,\dots,x_1}_{p},\dots,
\underbrace{x_m,\dots,x_m}_{p})
=
M_m(x_1,\dots,x_m).
\end{equation}
Indeed, by symmetry we may arrange the variables into $p$ identical blocks
$(x_1,\dots,x_m)$. Applying the replacement axiom to each block gives $pm$
copies of $M_m(x_1,\dots,x_m)$, and reflexivity then gives
\eqref{eq:replication}.

Let $S=\Q\cap[0,1]$. For $0\le p\le q$, $q\ge1$, define
\begin{equation}\label{eq:def-psi-K}
\psi\!\left(\frac pq\right)
=
M_q(\underbrace{a,\dots,a}_{q-p},\underbrace{b,\dots,b}_{p}).
\end{equation}
The endpoint cases $p=0$ and $p=q$ are immediate. If $p/q=p'/q'$, then by
\eqref{eq:replication},
\begin{align*}
M_q((q-p)a,pb)&=M_{qq'}(q'(q-p)a,q'pb),\\
M_{q'}((q'-p')a,p'b)&=M_{qq'}(q(q'-p')a,qp'b),
\end{align*}
where $M_r(\alpha a,\beta b)$ denotes $M_r$ applied to $\alpha$ copies of
$a$ and $\beta$ copies of $b$. Since $pq'=p'q$, the numbers of $a$'s
and $b$'s are the same. Thus $\psi$ is well-defined.

The function $\psi$ is strictly increasing on $S$. Indeed, if $p<p'$ and
the denominator is common, then
\[
\psi(p/q)=M_q((q-p')a,(p'-p)a,pb)
<
M_q((q-p')a,(p'-p)b,pb)=\psi(p'/q),
\]
using $a<b$ and strict increase.

We next prove the rational representation. Let $z_i=p_i/q\in S$ with a common
denominator $q$. Put
\[
B_i=((q-p_i)a,p_i b),
\]
a block of length $q$. Starting from $M_{nq}(B_1,\dots,B_n)$, apply the
replacement axiom to each block $B_i$, and then use
\eqref{eq:replication}; this gives
\[
M_n(M_q(B_1),\dots,M_q(B_n))=M_{nq}(B_1,\dots,B_n).
\]
Therefore
\begin{align*}
M_n(\psi(z_1),\dots,\psi(z_n))
&=M_{nq}((q-p_1)a,p_1b,\dots,(q-p_n)a,p_nb)\\
&=M_{nq}((nq-p_1-\cdots-p_n)a,(p_1+\cdots+p_n)b)\\
&=\psi\!\left(\frac{p_1+\cdots+p_n}{nq}\right)
=\psi\!\left(\frac{z_1+\cdots+z_n}{n}\right).
\end{align*}

Now fix $n\ge2$ and apply Proposition~\ref{prop:dense-domain} to
$F=M_n$, $S=\Q\cap[0,1]$, $f=\psi$, and
$w_1=\cdots=w_n=1/n$. The set $S$ is dense and closed under arithmetic
averages, and the identity just proved is exactly \eqref{eq:dense-identity}.
Hence $\psi$ extends uniquely to a continuous strictly increasing bijection
$\widetilde\psi:[0,1]\to I$, and the displayed representation holds for this
fixed $n$. The extension is independent of $n$, since it is the unique
continuous extension of the same strictly increasing function $\psi$ on the
dense set $S$. Thus the formula holds for every $n\ge2$, while the case
$n=1$ is $M_1=\id$. Replacing $\widetilde\psi$ by $\psi$ finishes the
proof.
\end{proof}

\section{Concluding remarks and open problems}
\label{sec:open}

The proof of Theorem~\ref{thm:main} shows that, in the symmetric reflexive case,
bisymmetry is strong enough to propagate a purely order-theoretic coding from a
dense rational-like set to the whole interval. The dense-domain lemma is the
regularity mechanism: once an averaging identity holds on a dense set, any gap
in the range of the coding function would generate uncountably many disjoint
intervals.

The main remaining question is whether symmetry is necessary.

\begin{open}\label{open:nonsymmetric}
Let $I=[a,b]$ be a non-degenerate compact interval and let $F:I^n\to I$ be
reflexive, bisymmetric and partially strictly increasing. Is $F$ necessarily
continuous? Equivalently, must such an operation have a weighted
quasi-arithmetic form?
\end{open}

This problem is open already in the binary case. There is, however, a partial
structural result in this direction: Burai, Kiss and Szokol \cite{BuraiKissSzokol2023} proved a dichotomy
for strictly increasing bisymmetric binary maps. In the reflexive setting, such
an operation is either symmetric everywhere or nowhere symmetric. In the
symmetric case, the automatic-continuity theorem of Burai--Kiss--Szokol applies;
the genuinely non-symmetric case remains open.

It is plausible that the binary problem plays a decisive role. Indeed, several
continuous representation theorems for $n$-variable bisymmetric means,
including Maksa's approach, reduce the $n$-variable situation to the binary
Aczel theorem. A comparable reduction without assuming continuity would be very
useful, but we do not pursue it here.

Reflexivity, on the other hand, cannot simply be dropped. In \cite{Kiss2026},
discontinuous bisymmetric strictly increasing operations are constructed on real
intervals in the non-reflexive regime, including operations of the form
\[
F(x_1,\dots,x_n)=f^{-1}\!\left(\sum_{i=1}^n \alpha_i f(x_i)\right),
\qquad
\alpha_i>0,
\qquad
\sum_i\alpha_i\ne1,
\]
with a strictly increasing but discontinuity-producing coding through a nowhere
dense set. Thus Open Problem~\ref{open:nonsymmetric} is the natural reflexive
boundary case.

\section*{Acknowledgements} The first author was supported by the Hungarian National Research, Development
and Innovation Office grants K146922, STARTING 150576 and FK 142993.

\noindent
{\sc Gergely Kiss:}\\
Department of Mathematics, Corvinus University of Budapest, \\
Fővám tér 13-15, Budapest HU-1093, Hungary,\\
and\\
HUN-REN Alfred Renyi Institute of Mathematics\\
Re\'altanoda utca 13-15, H-1053, Budapest, Hungary\\
E-mail: {\tt kiss.gergely@renyi.hu, kigergo57@gmail.com}

\bigskip

\noindent
{\sc Ekaterina Shulman:}\\
Institute of Mathematics, University of Silesia,  \\
14 Bankowa Str., Katowice PL-40-007, Poland  \\
Email:    {\tt ekaterina.shulman@us.edu.pl,   shulmanka@gmail.com}

\end{document}